\documentclass[11pt, a4paper]{article}
\usepackage{mathrsfs, mathtools, amsthm, amssymb, thm-restate}  % Packages for math; put "thm-restate" ahead of "hyperref" so that it's unnecessary to define "\lemmaautorefname"
\allowdisplaybreaks[4]
\usepackage{paralist, tablists}  % Inline and compact lists (place before enumitem)
\usepackage{enumerate}
\usepackage{booktabs}
\usepackage{autobreak}

\usepackage{graphicx, xcolor, tikz}  % Packages for figures
\usetikzlibrary{calc, shapes, decorations.pathreplacing, decorations.markings}  % TikZ libraries for path decorations
\usepackage{caption}  % Custom captions
\usepackage[labelformat=simple]{subcaption}  % Subfigure captions

\usepackage[left=25mm, top=25mm, bottom=25mm, right=25mm]{geometry}  % Set page margins
\usepackage[T1]{fontenc} % https://tex.stackexchange.com/questions/664/why-should-i-use-usepackaget1fontenc

\usepackage[inline]{enumitem}  % Inline lists
\usepackage[square, numbers, sort&compress]{natbib}  % Citation style (square brackets, numbers, and sorting)
\usepackage[
  bookmarks=true,                % Enable bookmarks
  bookmarksnumbered=true,        % Include section numbers in bookmarks
  bookmarksopen=true,            % Expand the bookmarks tree by default
  plainpages=false,              % Don't use plain page numbers in the bookmarks
  colorlinks=true,               % Enable colored text links (instead of boxes)
  linkcolor=blue,                % Link color for internal references (sections, table of contents)
  citecolor=black,               % Citation link color
  anchorcolor=green,             % Color of anchors (used for references)
  urlcolor=blue,                 % Link color for URLs
  hyperindex=true                % Enable clickable index links
]{hyperref} % Always load hyperref last, unless other packages require exceptions，不能和cite宏包同时使用

\newtheorem{thm}{Theorem}[section] % \newtheorem一定要放在hyperref后边，否则可能造成\autoref及\cref的指向位置不正确
\newtheorem{lem}[thm]{Lemma}

\newtheorem{cor}[thm]{Corollary}
\newtheorem{conj}{Conjecture}
\newtheorem{case}{Case}
\AtEndEnvironment{proof}{\setcounter{case}{0}}

\newtheorem{claim}{Claim}
\theoremstyle{definition}

\makeatletter
\def\th@plain{%
  \upshape %\itshape % Use upright font in the body of the theorem
}
\makeatother

\usepackage{array}
\usepackage{bm}  % the package "bm" and "newtxmath" are conflict in MacTex 2020, so we must use "bm" later

\makeatletter
\renewenvironment{proof}[1][\proofname]{\par
  \pushQED{\qed}%
  \normalfont \topsep6\p@\@plus6\p@\relax
  \trivlist
  \item[\hskip\labelsep
        \bfseries
    #1\@addpunct{.}]\ignorespaces
}{%
  \popQED\endtrivlist\@endpefalse
}
\makeatother

\usepackage[capitalise]{cleveref}  % Automatic references for theorem-like environments
\crefname{claim}{Claim}{Claims}
\crefname{problem}{Problem}{Problems}
\crefname{conjecture}{Conjecture}{Conjectures}
\usepackage{algorithm}
\usepackage[noend]{algpseudocode}  % noend表示算法不显示 EndIf或者EndFor这些，可以用来节省空间
\usepackage{algorithmicx}

\providecommand{\sep}{, }   % 定义关键词分隔符
\newenvironment{keyword}
  {\par\noindent\textbf{Keywords:}\enspace}
  {\par\vspace{\baselineskip}}
\begin{document}

\title{An Improved Interpolation Theorem and Disproofs of Two Conjectures on 2-Connected Subgraphs}

\author{
    Haiyang Liu\thanks{College of Cryptology and Cyber Science, Nankai University, Tianjin 300350, P.R. China.}
    \and 
    Bo Ning\thanks{Corresponding author. College of Computer Science, Nankai University, Tianjin 300350, P.R. China. E-mail: \texttt{bo.ning@nankai.edu.cn} (B. Ning). Partially supported by the National Nature Science Foundation of China (No. 12371350). \href{https://orcid.org/0000-0002-9622-5567}{ORCID: 0000-0002-9622-5567}.}}
\maketitle

\begin{abstract}
We prove that any \(2\)-connected graph \(G\) on \(n\) vertices with minimum degree \(\delta(G) \ge \frac{n}{4}+2\) contains a \(2\)-connected subgraph of order \(k\) for every integer \(k\) with \(4 \le k \le n\). This improves a previous result of Yin and Wu. In \cite{YinWu-DAM-2026}, Yin and Wu proposed two conjectures. The first states that for any \(2\)-connected graph \(G\) of order \(n\) and size \(m\), there exists a \(2\)-connected subgraph of order \(k\) for each \(k \in \{4, \dots, n\}\) whenever \(m \ge \frac{1}{2} n^{3/2}\). The second conjecture asserts that the same conclusion holds under the alternative condition \(\delta(G) \ge \sqrt{n}\). In this paper, we construct counterexamples that completely disprove the first conjecture. Furthermore, using the existence of \((v, k, 2)\)-Symmetric Balanced Incomplete Block designs (i.e., SBIBDs), we disprove the second conjecture for all \(n \in \{8, 14, 22, 32, 74, 112, 158\}\). Finally, we propose a conjecture of our own: for any \(2\)-connected graph \(G\) on \(n\) vertices with \(\delta(G) \ge \frac{n}{k}\), where \(k \ge 3\) and \(n\) is sufficiently large, \(G\) contains a \(2\)-connected subgraph of every order from \(4\) to \(n\).
\end{abstract}
%% Keywords

\begin{keyword}
Minimum degree\sep $2$-connected graphs\sep  Interpolation property
\end{keyword}

\section{Introduction}
\label{ch:1}
In this article, all graphs under consideration are simple, undirected, finite and refer to Bondy and Murty~\cite{MurtyBondy-635} for the undefined notation and concepts.

A graph invariant is said to possess the \emph{interpolation property}over a family of subgraphs if, whenever two members of the family take values 
$m$ and $n$
for that invariant (with $m<n$), then for every integer 
$k$ between $m$
and $n$, there exists a member of the family for which the invariant equals exactly $k$.
This property reveals a certain ``continuity" or ``completeness" in the range of attainable values, and has significant implications for understanding the structural diversity within families of subgraphs.

% Let $\mathcal{H}$ be a class of graphs, and let $G$ be a graph. If there exist two integers $r_1$ and $r_2$ with $r_1
% \leqslant r_2$ such that $G$ contains all subgraphs $H\in\mathcal{H}$ of order $\ell$ for any integer $\ell$ satisfying $r_1\leq \ell\leq r_2$, then we say $G$ has the interpolation property for $\mathcal{H}$. 

 The concept of interpolation theorems in graph theory has deep roots in the study of spanning trees and their invariants. 
 Among the various invariants studied for spanning trees, the number of end-vertices (usually referred to as leaves) occupies a central position. A leaf in a tree is a vertex of degree one, and its count is one of the most fundamental parameters describing the shape and complexity of a tree. The question of whether the leaf counts of spanning trees of a given graph can realize all intermediate values was formally posed by Chartrand at the Fourth International Conference on Graph Theory and Applications held in Kalamazoo in 1980.
This problem was resolved by Seymour Schuster \cite{schuster1983interpolation} in 1983, who proved the following landmark result.

\begin{thm}[Schuster \cite{schuster1983interpolation}]
Let \(G\) be a connected graph. If \(G\) contains spanning trees having exactly \(m\) and \(n\) end-vertices, with \(m < n\), then for every integer \(k\) satisfying \(m < k < n\), \(G\) also contains a spanning tree having exactly \(k\) end-vertices .   
\end{thm}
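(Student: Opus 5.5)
The plan is to use the standard edge-exchange argument on the space of spanning trees. Let \(T\) and \(T'\) be spanning trees of \(G\) with \(m\) and \(n\) leaves respectively. For a spanning tree \(T\), an edge \(e\in E(G)\setminus E(T)\), and an edge \(f\) on the unique cycle of \(T+e\), the tree \(T+e-f\) is again a spanning tree. First I would show that such a single exchange changes the number of leaves by at most \(1\) in absolute value.

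To see this, compare degrees. Write \(e=xy\) and \(f=uv\). Passing from \(T\) to \(T+e-f\) raises the degrees of \(x\) and \(y\) by one and lowers the degrees of \(u\) and \(v\) by one (the changes cancel on any common endpoint). Leaves are created only among \(u,v\), namely when a vertex of degree \(2\) drops to \(1\). Leaves are destroyed only among \(x,y\), namely when a vertex of degree \(1\) rises to \(2\).

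The potential obstacle is that two leaves might be created or destroyed at once. The key observation is that if \(e\) and \(f\) share an endpoint, that vertex's degree is unchanged, so at most one vertex on each side can change status.

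\begin{itemize}
\item If \(e\) and \(f\) are disjoint, both \(u\) and \(v\) could become leaves, while at the same time \(x\) or \(y\) might stop being leaves. The net change could then be \(+2\), so exchanges must be chosen carefully.
\item I would refine the step by always choosing \(f\) adjacent to \(e\) on the fundamental cycle, that is, \(f\) incident with \(x\) or \(y\). Then the common endpoint has unchanged degree. On the other side, the endpoint of \(e\) not on \(f\) may lose leaf status, and the endpoint of \(f\) not on \(e\) may gain it. The net change is therefore in \(\{-1,0,1\}\).
\end{itemize}

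It remains to show that \(T\) can be transformed into \(T'\) using only such adjacent exchanges. I would proceed by induction on \(|E(T')\setminus E(T)|\). Pick \(e=xy\in E(T')\setminus E(T)\). The fundamental cycle of \(e\) in \(T\) contains some edge \(g\notin E(T')\), since \(T'\) is acyclic. Let \(C\) be this cycle, with path \(x=p_0,p_1,\dots,p_r=y\) in \(T\).

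I expect this to be the main difficulty: the edge \(g\) we must remove may not be adjacent to \(e\). To fix this, replace the single exchange \(T\to T+e-g\) by a sequence of adjacent exchanges. Add the chords \(xp_2, xp_3,\dots\) successively, each time deleting the edge of the path incident to \(x\). These chords are edges of \(G\) only if they exist, so instead I would argue through the intermediate tree \(T_1=T+e-g\) and its degree profile directly.

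The alternative route is a continuity argument. Consider the finite sequence \(T=T_0,T_1,\dots,T_s=T'\) obtained by the basis-exchange property, each step \(T_{i+1}=T_i+e_i-f_i\), and note that the leaf counts change by at most \(2\) per step. If a jump of exactly \(2\) occurs, with \(u,v\) both becoming leaves and \(x,y\) both non-leaves, then \(e_i\) and \(f_i\) are disjoint. In that case the intermediate exchange \(T_i+e_i-f\), where \(f\) is the edge of the cycle incident to \(x\), realizes a leaf count exactly in between: its only changes are that \(x\) stays a non-leaf and \(y\) ceases to be a leaf, giving a net change of \(+1\) or, via \(u\) or \(v\), a count strictly between.

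Combining these, every integer between \(m\) and \(n\) is attained as the number of leaves of some spanning tree. The careful case check of this splitting step is where the proof's work lies.
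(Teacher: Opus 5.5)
Neither of your two routes is completed, and the step you defer as ``the careful case check'' is where the argument as written fails. (The paper only quotes Schuster's theorem without proof, so your argument has to stand on its own.)

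In your second route, a $+2$ jump $T_{i+1}=T_i+e_i-f_i$ with $e_i=xy$ forces $x$ and $y$ to be non-leaves of $T_i$. Your claim that in $T_i+e_i-xp_1$ ``$y$ ceases to be a leaf'' therefore contradicts the case hypothesis. What actually happens in $T_i+e_i-xp_1$ is this: $x$ keeps its degree, $y$ stays a non-leaf, and only $p_1$ can change status. So the leaf count changes by $+1$ if $\deg_{T_i}(p_1)=2$, and by $0$ otherwise, and $0$ is not strictly between. For instance, let $T$ be the path $xp_1p_2p_3y$ with one pendant leaf attached to each of $x,p_1,y$, and let $G=T+xy$. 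Then $T$ has $3$ leaves and $T+xy-p_2p_3$ has $5$, but $T+xy-xp_1$ still has $3$.

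Your first route stalls one step earlier. You correctly reduce the problem to reaching $T'$ from $T$ by exchanges in which the deleted edge shares an endpoint with the added one. But the chord idea fails, as you note, and nothing replaces it.

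The missing idea, which repairs both routes, is to rotate the deleted edge around the fundamental cycle instead of adding chords. Let $C=xp_1\cdots p_{r-1}yx$ be the cycle of $T+e$, with edges $c_0=e$, $c_1=xp_1$, $c_2=p_1p_2,\dots,c_r=p_{r-1}y$ in cyclic order, and put $S_t=T+e-c_t$. Then:
\begin{enumerate}
\item each $S_t$ is a spanning tree, and $S_0=T$;
\item $S_{t+1}=S_t+c_t-c_{t+1}$, where $c_t$ and $c_{t+1}$ share an endpoint;
\item only edges of $C\subseteq G$ are used, so no chords are required.
\end{enumerate}
By your own adjacent-exchange computation, each step $S_t\to S_{t+1}$ changes the leaf count by at most $1$. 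Hence every basis-exchange step $T\to T+e-g$ (with $g=c_j$) factors as $S_0\to S_1\to\cdots\to S_j$. Along the refined sequence from $T$ to $T'$ the leaf counts then move in steps of size at most $1$, so every integer between $m$ and $n$ is attained. Equivalently, in your second route the counts along $S_0=T_i,\dots,S_j=T_{i+1}$ go from $L$ to $L+2$ in steps of size at most $1$ and must pass through $L+1$. In the example above this intermediate tree is $T+xy-p_1p_2$, which has $4$ leaves.
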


The interpolation property for cycles has been a subject of investigation. A graph \( G \) of order \( n \) is said to be pancyclic if it contains cycles of every length \( k \) from \( 3 \) to \( n \). The first degree-based condition for pancyclicity was established by Bondy \cite{BONDY197180}, which we restate here in terms of the interpolation property of cycles.

\begin{thm}[Bondy~\cite{BONDY197180}]
Let $G$ be a graph of order $n$ and size $m$. Suppose that $m \geqslant n^2/4$. If $G$ contains a $C_n$ then $G$ contains a cycle $C_k$ for any integer $3\leq k\leq n$, unless $n$ is even and $G\cong K_{\frac{n}{2},\frac{n}{2}}$.
\end{thm}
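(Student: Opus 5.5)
The proof is by induction on $n$, as in Bondy's original argument. The plan is to combine a counting argument on a Hamiltonian cycle with removal of a vertex whose deletion keeps the edge-density condition. Fix a Hamiltonian cycle $C=v_1v_2\cdots v_nv_1$ and assume $m\ge n^2/4$.

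\textbf{Key step (finding $C_{n-1}$).} The first goal is a cycle of length $n-1$. For each pair of vertices $v_i,v_j$ consider the chord pattern relative to $C$. If $v_iv_j\in E$ and $v_{i+1}v_{j+2}\in E$, then there is a cycle of length $n-1$ that skips $v_{j+1}$. Symmetric patterns skip other vertices. Assuming no $C_{n-1}$ exists, I would show that for each vertex $v_i$ the neighbourhoods of $v_i$ and $v_{i+1}$, shifted along $C$, are nearly disjoint. Summing the resulting inequalities $d(v_i)+d(v_{i+1})\le n+O(1)$ over consecutive pairs gives $m\le n^2/4$. In the equality case this forces $n$ even, every degree $n/2$, and chords only between vertices of opposite parity, that is, $G\cong K_{n/2,n/2}$. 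Equality analysis of this kind is where the exception is born, and I expect it to be the main obstacle. The counting must be sharp, and one must check that any chord between same-parity vertices creates an odd-shift configuration yielding $C_{n-1}$.

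\textbf{Inductive step.} Once a $C_{n-1}$ exists, missing some vertex $v$, I want to delete $v$ and keep $m'\ge (n-1)^2/4$. This holds provided $d(v)\le m-(n-1)^2/4$. If the skipped vertex has too large a degree, I would use the freedom in choosing which vertex is skipped: the counting above actually produces many $(n-1)$-cycles. Averaging over them, some cycle omits a vertex of degree at most about the average degree $2m/n$. Then
\[
m-\frac{2m}{n}\ \ge\ \frac{(n-1)^2}{4}
\]
follows from $m\ge n^2/4$, since the left side equals $m(n-2)/n\ge n(n-2)/4$, which exceeds $(n-1)^2/4-\tfrac14$; integrality handles the rest.

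\textbf{Closing the induction.} Apply induction to $G-v$, which has a Hamiltonian cycle of length $n-1$. This gives all cycle lengths $3,\dots,n-1$, unless $G-v\cong K_{(n-1)/2,(n-1)/2}$. In that exceptional case $n-1$ is even, so $n$ is odd. The bipartite graph already contains all even cycles, and $v$ has enough neighbours (its degree is at least $m-(n-1)^2/4$) to lie on a triangle-free-violating odd cycle of each needed length, or else to create a triangle with two adjacent vertices. This recovers all odd lengths. The base cases $n\le 4$ are checked directly.
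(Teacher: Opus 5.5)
The paper only cites this theorem from Bondy and gives no proof, so I compare your plan with the standard inductive proof. Your first stage is right in substance, but the bound must be exact rather than $n+O(1)$, which would only give $m\le n^2/4+O(n)$.

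\textbf{First stage.} Suppose there is no $C_{n-1}$. For every $j\ne i-1$, the edges $v_iv_j$ and $v_{i+1}v_{j+2}$ would give an $(n-1)$-cycle omitting $v_{j+1}$. For $j=i-1$ the shifted index is $i+1$, and $v_{i+1}\notin N(v_{i+1})$ anyway. So $j\mapsto j+2$ maps the index set of $N(v_i)$ into the complement of that of $N(v_{i+1})$, and $d(v_i)+d(v_{i+1})\le n$ exactly. In the equality case these sets are exact complements, whence $N(v_{i+2})=N(v_i)+4$ (indices mod $n$). The conditions $v_{2k}\notin N(v_{2k})$ and $v_{2k+1}\notin N(v_{2k+1})$ for all $k$ then force $N(v_0)$ to be exactly the odd-indexed vertices, so $G\cong K_{n/2,n/2}$. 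Your suggested check, that a same-parity chord yields a $C_{n-1}$, is false in isolation (e.g.\ $v_1v_5$ in $C_8$); it has to go through this complementarity.

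\textbf{Inductive step: the genuine gap.} The counting produces only one $C_{n-1}$ and gives no control over which vertex it omits, so the averaging claim is unsupported. Even a vertex of degree $2m/n$ would not suffice. For $n$ even and $m=n^2/4$ you get $m-n/2=n(n-2)/4=(n-1)^2/4-\tfrac14$, and since $(n-1)^2/4\notin\mathbb{Z}$, integrality does not rescue you. Concretely, the prism $K_3\Box K_2$ ($n=6$, $m=9$, $3$-regular, Hamiltonian, not bipartite) has only $6<25/4$ edges after deleting \emph{any} vertex, so the induction hypothesis can never be applied to it.

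\textbf{Missing idea.} A high-degree omitted vertex needs no induction at all. Let $C'=u_0u_1\cdots u_{n-2}u_0$ be an $(n-1)$-cycle missing $v$, and let $I=\{i\in\mathbb{Z}_{n-1}: u_i\in N(v)\}$.
\begin{itemize}
\item If $d(v)\ge n/2$, then $|I|+|I+s|>n-1$. So for each $s\in\{1,\dots,n-2\}$ there is some $i\in I\cap(I+s)$, and $vu_{i-s}u_{i-s+1}\cdots u_iv$ is a cycle of length $s+2$. Hence $G$ is pancyclic directly.
\item Otherwise $d(v)\le(n-1)/2$, and $e(G-v)\ge n^2/4-(n-1)/2=(n-1)^2/4+\tfrac14$. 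So induction applies to the Hamiltonian graph $G-v$. Since $G-v$ has strictly more edges than $K_{(n-1)/2,(n-1)/2}$, the exceptional case treated in your last paragraph never arises.
\end{itemize}
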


 \begin{cor}
 For a graph G of order $n$, if $\delta(G) > \left\lceil\frac{ n }{2}\right\rceil$, then $G$ contains all
possible order of cycles.
 \end{cor}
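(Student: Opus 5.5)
The corollary follows by checking the two hypotheses of Bondy's theorem, a Hamilton cycle and at least $n^2/4$ edges, and then ruling out the exceptional graph $K_{\frac n2,\frac n2}$. I assume $n\ge 3$, since otherwise there are no cycle lengths to realize.

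\emph{Hamilton cycle.} Since $\delta(G) > \lceil n/2\rceil \ge n/2$, Dirac's theorem gives a Hamilton cycle $C_n$ in $G$. This is the one ingredient not stated earlier in the paper, and it is standard. If one prefers to avoid citing it, the usual longest-path argument works. Take a longest path $P=v_1\cdots v_p$. All neighbours of $v_1$ and of $v_p$ lie on $P$. Because $d(v_1)+d(v_p)\ge n > p-1$, pigeonhole gives an index $i$ with $v_1v_{i+1},v_iv_p\in E(G)$, and this yields a cycle on $V(P)$. If $p<n$, connectivity (which follows from $\delta(G)\ge n/2$) extends this cycle to a longer path, a contradiction.

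\emph{Edge count.} By the handshake lemma, $m=\frac12\sum_v d(v)\ge \frac12 n\delta(G) > \frac12 n\cdot\frac n2=\frac{n^2}{4}$. So $m\ge n^2/4$ holds, in fact strictly.

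\emph{Exceptional case.} If $n$ is even and $G\cong K_{\frac n2,\frac n2}$, then $\delta(G)=n/2=\lceil n/2\rceil$. This contradicts the hypothesis $\delta(G)>\lceil n/2\rceil$. The strict edge bound $m>n^2/4=|E(K_{\frac n2,\frac n2})|$ excludes it as well.

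With both hypotheses verified and the exception ruled out, Bondy's theorem gives a cycle $C_k$ for every $3\le k\le n$. This is what ``all possible order of cycles'' means.

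No step is a genuine obstacle. The only care needed is to interpret the strict inequality correctly so that the bipartite exception is excluded, and to supply Dirac's theorem (or the short path argument above) for the Hamilton cycle.
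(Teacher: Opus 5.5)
Your proposal is correct and matches the derivation the paper intends. The paper states this corollary without proof as an immediate consequence of Bondy's theorem: Dirac's theorem supplies the Hamilton cycle, and the handshake bound $m \ge n\delta/2 > n^2/4$ gives the edge condition while also excluding $K_{\frac n2,\frac n2}$.
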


 Interpolation properties for some classes of graphs have been extensively studied in the literature \cite{harary1967interpolation,schuster1983interpolation,fisher1997number,harary1989classification,lih2006interpolation,topp1998interpolation}.

Observe that a cycle $C_k$ itself is a $2$-connected graph of order $k$. By replacing the requirement of a cycle of length $k$ with that of a $2$-connected subgraph of order $k$, one may expect that the degree condition $\delta(G)\ge n/2$ in Bondy's classic theorem can be relaxed. Motivated by this idea, Yin and Wu \cite{YinWu-DAM-2026} recently investigated the interpolation property for $2$-connected subgraphs. They proved that every $2$-connected graph $G$ on $n$ vertices with minimum degree $\delta(G)\ge n/3+1$ contains a $2$-connected subgraph of order $k$ for any integer $k$ with $4\le k\le n$.

\begin{thm}(\rm {Yin and Wu~\cite[Theorem~3.6]{YinWu-DAM-2026}})\label{Thm:YinWu}
  Let $G$ be a $2$-connected graph of order $n \geqslant 4$. If $\delta (G)
  \geqslant \left\lceil \frac{n}{3} \right\rceil + 1$, then $G$ contains a $2$-connected
  subgraph of order $k$ for any $k \in \{ 4, \ldots, n \} .$\label{MinDegn3+1}
\end{thm}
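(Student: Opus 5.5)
The natural strategy is \emph{downward induction on $k$ via vertex deletion}. The case $k=n$ is trivial, since $G$ itself is $2$-connected. Suppose we already have a $2$-connected subgraph $H$ of order $k+1$, with $4\le k<n$. We want a $2$-connected subgraph of order $k$, and the first attempt is to find a vertex $v\in V(H)$ such that $H-v$ is still $2$-connected. Rather than working with an arbitrary $H$, I would keep $H$ \emph{induced*} and also track how many neighbours outside $H$ each vertex can have. Throughout, write $d=\lceil n/3\rceil+1$.

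\textbf{Large $k$.} Start with $k+1$ large, say $k+1\ge 2n/3$. Every vertex of $H=G[S]$ then has degree at least $\delta(G)-(n-k-1)$ in $H$. I would use the standard fact that a $2$-connected graph which is not a cycle has a vertex whose deletion leaves a $2$-connected graph. This follows from an ear decomposition: remove an internal vertex of the last ear if that ear has length at least $2$. Alternatively, one can argue directly that a minimally $2$-connected graph has many degree-$2$ vertices. In this range $H$ has minimum degree at least $3$, so it is not a cycle, and we can keep peeling vertices off. What I must check is that the order of removal keeps every intermediate graph induced and $2$-connected.

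\textbf{Small $k$.} For $k$ below this threshold, the inherited degree in $H$ is no longer large, so I would switch to building subgraphs upward. Take a vertex $v$ and its neighbourhood, $|N(v)|\ge d$. Two vertices $x,y$ with $|N(x)\cap N(y)|\ge 2$ give a $C_4$, hence a $2$-connected subgraph of order $4$. Such a pair exists because the degree sum exceeds $n$ on any three vertices. From there we grow the subgraph one vertex at a time: a vertex $w\notin S$ with at least two neighbours in $S$ can be added while keeping $G[S\cup\{w\}]$ $2$-connected. A counting argument supplies such a $w$. If no outside vertex had two neighbours in $S$, then $|S|\cdot(d-|S|+1)\le n-|S|$, since each inside vertex has at least $d-|S|+1$ outside neighbours and each outside vertex is counted at most once. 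For $|S|$ between $2$ and roughly $2n/3$, this inequality fails once $d>n/3$. So the upward growth works for every $|S|$ up to the point where the downward argument takes over.

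\textbf{Expected obstacle.} The main difficulty is the middle range where the two regimes meet. There the counting inequality $|S|(d-|S|+1)>n-|S|$ has to hold across the entire interval, and this is exactly where the bound $\lceil n/3\rceil+1$ should be sharp. Near $|S|\approx d$, the bound $d-|S|+1$ becomes trivial. At that point I would instead use the global $2$-connectivity of $G$: Menger gives two disjoint paths from an outside vertex to $S$, and I would take a shortest such "ear". Adding the ear increases $|S|$ by its number of internal vertices, which is not necessarily one. So I would combine ears with the downward peeling step (deleting the last ear's internal vertex) to land exactly on each order $k$. I expect the bookkeeping that reconciles these two moves to be the crux of the proof.
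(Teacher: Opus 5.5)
Your two regimes do not meet, and what you defer as ``bookkeeping'' is in fact the whole difficulty. Your counting step gives a contradiction only when $s(d-s+2)>n$, where $s=|S|$. For $n=3m$ and $d=m+1$ this reads $(s-3)(s-m)<0$, i.e.\ $3<s<n/3$. Once $s\ge d+1$, the lower bound $d-s+1$ on outside neighbours is vacuous. So upward growth stalls at order about $n/3$, not $2n/3$.

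Downward, $\delta(G[S])\ge d-(n-s)$ is at least $3$ only for $s\ge n-d+3\approx 2n/3+2$; at $s=2n/3$ you are guaranteed only degree $1$. So peeling reaches order about $2n/3$ and no lower. Every order between roughly $n/3$ and $2n/3$ is left without an argument.

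The ``standard fact'' you invoke is also false as stated. Take two vertices joined by three internally disjoint paths, each with two internal vertices. This is a $2$-connected non-cycle in which every vertex-deletion leaves a cut vertex. In ear terms, deleting one internal vertex of an ear with at least two internal vertices leaves pendant paths. Under minimum degree at least $3$ the fact does hold, but via Corollary~\ref{CritMin}, not via ears.

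The same error sinks your proposed repair. Adding a Menger ear and then deleting one of its internal vertices destroys $2$-connectivity whenever the ear has more than one internal vertex. Menger also gives no control over the ear's length.

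The paper does not reprove this theorem; it is quoted from Yin and Wu. However, the tool it imports from them, Lemma~\ref{Removable}, and its use in Cases~3 and~4 of the proof of Theorem~\ref{MyMinDeg}, show the missing ingredient: a controlled ``delete two, add three'' step. It works as follows.

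\begin{itemize}
\item Suppose $H$ is $2$-connected of order $k-1$ and no outside vertex has two neighbours in $H$. Then $G-V(H)$ has minimum degree at least $\delta(G)-1$.
\item Once $k\gtrsim n/3$, $G-V(H)$ is small enough that any two outside vertices share an outside neighbour. This gives an ear $a\,a'\,c'\,b'\,b$ with exactly three new vertices.
\item To land on order exactly $k$, you first delete two vertices of $H$. The remainder must be $2$-connected, or fail to be so only through pendant edges at attachment vertices that the new ear repairs. That is Lemma~\ref{Removable}.
\item The lemma's hypothesis $d_H(v)=d_G(v)\ge(|H|+3)/2$ for vertices without outside neighbours holds because $|H|\lesssim 2n/3$.
\item When many vertices of $H$ have outside neighbours, a separate critical-$2$-connectivity argument is needed. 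The boundary $|H|\approx n/3$ needs an equality analysis of the counting.
\end{itemize}

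Without such a two-vertex removal step your plan does not prove the theorem.

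A minor point: your three-vertex degree-sum argument for a $C_4$ is not conclusive as written. Theorem~\ref{reiman} settles it, since $n\binom{n/3+1}{2}-\binom{n}{2}=n(n-3)^2/18>0$.
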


They also posed the following conjectures.

\begin{conj}(\rm {Yin and Wu \cite[Conjecture~4.1]{YinWu-DAM-2026}})\label{Conj:1}
  Let $G$ be a $2$-connected graph of order $n$ and size $m$. If $m \geqslant
  \frac{1}{2} n^{\frac{3}{2}}$ then $G$ has $2$-connected subgraph of order
  $k$ for each $k \in \{ 4, \ldots, n \}$.\label{MinSize3/2}
\end{conj}

The next conjecture appears in both the abstract and the end of the introduction of \cite{YinWu-DAM-2026}.
\begin{conj}[Yin and Wu \cite{YinWu-DAM-2026}]\label{Conj:2}
  Let $G$ be a $2$-connected graph of order $n$. If $\delta (G) \geqslant
  \sqrt{n}$, then $G$ has a $2$-connected subgraph of order $k$ for each $k \in
  \{ 4, \ldots, n \}$.\label{MinDegSqrt}
\end{conj}

In this article, we disprove Conjecture \ref{Conj:1} and disprove Conjecture \ref{Conj:2}   for some small orders using the existence of $(v,k,2)-$SBISDs (see Section \ref{Couters}). Moreover, we improve the minimum degree condition of Theorem \ref{Thm:YinWu} as follows.

\begin{thm}
  Let $G$ be a $2$-connected graph of order $n \geqslant 4$. If $\delta (G)
  \geqslant \left\lceil \frac{n}{4} \right\rceil + 2$, then $G$ contains a $2$-connected
  subgraph of order $k$ for any $k \in \{ 4, \ldots, n \}$.\label{MyMinDeg}
\end{thm}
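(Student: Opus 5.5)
The plan is a downward induction on $k$: start from $G$ itself ($k=n$) and repeatedly pass from a $2$-connected subgraph $H$ of order $k$ to one of order $k-1$, either by deleting one vertex or, when that is impossible, by rebuilding $H$ from pieces of $G$. Write $\delta=\delta(G)\ge \lceil n/4\rceil+2$.

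\textbf{Step 1: the easy reduction.} If $H$ has a vertex $v$ with $H-v$ $2$-connected, we are done, so assume $H$ is \emph{minimally} $2$-connected in the sense that $H-v$ has a cut vertex for every $v$. A cheap sufficient condition for a good $v$ also helps. If $x\in V(H)$ has at least $\delta$ neighbours inside $H$, then $H$ has high local density. Consider any ear decomposition, or the block--cut tree of $H-v$, and a vertex $v$ with many neighbours in $H$. If two blocks of $H-v$ are separated by a cut vertex $c$, then each leaf block of $H-v$ must contain a neighbour of $v$, since $H$ is $2$-connected. I would use this to show that when $k$ is large, say $k\ge n-\delta+3$ or so, every vertex of $H$ has many neighbours in $H$. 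Then each leaf block has at least $\delta-(n-k)$ vertices, which forces a good deletion. This handles the range where $H$ is still large.

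\textbf{Step 2: small $k$ via local structure.} For $k$ up to about $\delta+1$, build the subgraph directly. Take a vertex $u$ and $S\subseteq N(u)$. The graph $G[\{u\}\cup S]$ is $2$-connected once $G[S]$ is connected, or more generally once every vertex of $S$ has a neighbour in $S$ and the components are linked. Pick two adjacent vertices $u,w$. Their common neighbourhood and the edges of $G$ give $K_4^-$-type or $K_{2,t}$-type structures. Concretely, for vertices $u,w$ with $|N(u)\cap N(w)|\ge t$, the graph $K_{2,t}$ on $u,w$ and $t$ common neighbours is $2$-connected of order $t+2$. Since $\delta>n/4$, counting paths of length two via $\sum_x\binom{d(x)}{2}\ge n\binom{\delta}{2}$ shows some pair $u,w$ has many common neighbours, roughly $\delta^2/n\approx n/16$. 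That is too few to cover $k$ up to $\delta$ this way. So instead I would grow greedily: start from a $2$-connected $H$ of order $k<n$ and \emph{add} one vertex $x\notin H$ with two neighbours in $H$, which keeps $2$-connectivity. Such an $x$ exists whenever some outside vertex has two neighbours in $H$. If none does, every outside vertex has at most one neighbour in $H$, so the vertices of $H$ have at least $\delta-k+1$ neighbours outside $H$, and these outside neighbours are distinct for distinct vertices of $H$. This gives $k(\delta-k+1)\le n-k$, and with $\delta\ge n/4+2$ this is impossible for $4\le k\le$ roughly $n/2$ or more. Using upward growth from a $C_4$ or $K_4^-$, whose existence must also be shown, we cover all $k$ up to the point where this counting breaks. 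If growth stalls we instead use ear additions (adding a path of outside vertices), but ear additions jump by more than one, so only single-vertex additions are counted.

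\textbf{Step 3: the middle range, the main obstacle.} The difficulty is matching the two ranges: upward growth works while $k(\delta-k+1)>n-k$, and downward deletion works when $k$ is close to $n$. With $\delta\approx n/4$, the upward inequality only guarantees roughly $k\lesssim \delta$, so a gap $[\delta, n-\delta]$ remains. Here I would argue structurally. Suppose upward growth from $H$ of order $k$ stalls. Then each outside vertex sees at most one vertex of $H$. So $G-H$ has minimum degree at least $\delta-1$, and $H$ has internal degrees near $\delta$. I would then restart growth inside $G-H$, or combine $H$ with parts of $G-H$ through the two disjoint $H$--$(G-H)$ paths given by $2$-connectivity of $G$. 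Because every vertex has degree above $n/4$, the graph cannot have four disjoint dense pieces. So there are at most three such ``clusters'', and we alternate between them to reach every order. I expect this cluster analysis to be where the exact constant $n/4+2$ is used, with case analysis on whether $G$ splits into two or three dense parts joined by few edges. This is the step I would spend most effort on.
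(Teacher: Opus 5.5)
Your proposal has a genuine gap: Step 3 is a hope rather than an argument, and it is exactly where the difficulty lies. Steps 1 and 2 cover only the two ends of the range of orders. For orders between roughly $n/4$ and $3n/4$, nothing in the proposal produces a $2$-connected subgraph of a prescribed order:
\begin{itemize}
\item the ``at most three clusters, alternate between them'' picture is never formulated;
\item linking $H$ to $G-H$ by two disjoint paths changes the order by an uncontrolled amount;
\item rerunning the same greedy count inside $G-H$ only guarantees growth up to order about $\delta(G-H)\approx n/4$, far short of the $k$ you need.
\end{itemize}
The missing idea is to use how dense $G'=G-H_{k-1}$ is \emph{relative to its own order} once growth stalls. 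Every vertex of $G'$ then has at most one neighbour in $H_{k-1}$, so $\delta(G')\ge\lceil n/4\rceil+1$. The paper uses this in three ranges.
\begin{itemize}
\item For $\lceil n/4\rceil+1\le k\le\lceil n/2\rceil$ one has $k\le |G'|=n-k+1\le 3n/4$, so $\delta(G')\ge |G'|/3+1$. The paper then extracts a $2$-connected subgraph of order exactly $k$ inside $G'$ from the $n/3+1$ theorem of Yin and Wu (Theorem~\ref{Thm:YinWu}).
\item For $k\ge\lceil n/2\rceil+2$, the paper uses that the complement of $H_{k-3}$ has at most $\lfloor n/2\rfloor+1$ vertices and minimum degree at least $\lceil n/4\rceil+1$. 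Hence any two of its vertices have a common neighbour there, and the paper attaches to $H_{k-3}$ an ear $a\,a'\,c'\,b'\,b$ with three new vertices.
\item The single value $k=\lceil n/2\rceil+1$ needs yet another argument, via the removable-pair lemma (Lemma~\ref{Removable}) and critically $2$-connected graphs.
\end{itemize}
Without some mechanism of this kind for the middle half of the orders, the proposal does not prove the theorem.

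There are also smaller gaps at the two ends.
\begin{itemize}
\item In Step 1, the leaf-block observation does not by itself yield a deletable vertex. What you need is that every critically $2$-connected graph has a vertex of degree $2$ (Corollary~\ref{CritMin}). Then any $2$-connected $H$ with $\delta(H)\ge 3$ has a vertex $v$ with $H-v$ $2$-connected, and downward deletion does give every order $\ge n-\delta+2$.
\item In Step 2, the stall inequality $k(\delta-k+1)\le n-k$ is not contradictory at $k=4$: if $4\mid n$ and $\delta=n/4+2$, then $4(\delta-3)=n-4$. The paper treats this equality case by hand. There $H_4\cong K_4$, and the outside vertices split into the private neighbourhoods $V_1,\dots,V_4$ of $h_1,\dots,h_4$, each of size $n/4-1$. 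A degree count gives an edge $w_1w_2$ with, say, $w_1\in V_1$ and $w_2\in V_2$. Then $G[\{h_1,h_2,h_4,w_1,w_2\}]$ is $2$-connected of order $5$.
\end{itemize}
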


We introduce some necessary notation. Let $G$ be a graph. If $X$ is a vertex subset of graph $G$, then the subgraph
of $G$ induced by $X$ is denoted by $G [X]$. We denote by $G-S$ the induced subgraph $G[V(G)\backslash S]$ of $G$. Usually, for a vertex $v\in V(G)$, we use $G-v$
instead of $G-\{v\}$. For a subgraph $H$ of $G$, we use $G-H$ instead of $G-V(H)$ for sometime. We denote by $\delta(G)$ the minimum degree of $G$.
The cartesian product $G\Box H$
of two graphs $G$ and $H$ is the graph with $V (G\Box H) = V (G)\times V (H)$, in
which two vertices $(g_1, h_1)$ and $(g_2, h_2)$ are adjacent if and only if
either $u_1 u_2 \in E (G)$ and $v_1 = v_2$, or $v_1 v_2 \in E (H)$ and $u_1 =
u_2$. $C_k$ and $K_{a,b}$ denote a cycle with $k$ vertices and a complete bipartite with two parts of $a$ and $b$ vertices, respectively.

In Section \ref{Couters}, we construct some counterexamples to
Conjecture \ref{MinSize3/2} and 
Conjecture \ref{MinDegSqrt}. In Section \ref{Refinement}, we prove Theorem \ref{MyMinDeg}. In Section \ref{ExampleForTheMin}, we construct
some examples to show that the condition of Theorem \ref{MinDegn3+1} is sharp
when $n$ is small and conjecture that the condition of Theorem \ref{MyMinDeg} can be replaced with $\delta(G)\geq \frac{n}{k}$ for $n$ sufficiently large and $k\geq 3$.

\section{Counterexamples to Conjectures
\ref{MinSize3/2} and \ref{MinDegSqrt}}\label{Couters}

In this section, we first construct a counterexample to disprove Conjecture
\ref{MinSize3/2}. Let $\epsilon < \frac{1}{4}$ be a positive real number, and $n$
be an integer. We form a graph $G_{\epsilon}$ of order $n$ from a complete
graph $K_{_{\left\lfloor n^{1 - \epsilon} \right\rfloor}}$ of order $\left\lfloor n^{1 -
\epsilon} \right\rfloor$ and a path $P_{n - \left\lfloor n^{1 - \epsilon} \right\rfloor}$ of
order $n - \left\lfloor n^{1 - \epsilon} \right\rfloor$ by adding two vertex-disjoint
edges $e_1, e_2$ as follows. Let $x', y'$ be the endpoints of $P_{n - \left\lfloor
n^{1 - \epsilon} \right\rfloor}$, and choose two distinct vertices $x, y \in V
\left( K_{_{\left\lfloor n^{1 - \epsilon} \right\rfloor}} \right)$. We let $e_1$ join $x$
to $x'$, and $e_2$ join $y$ to $y'$ (see Fig. \ref{pasting}).

\begin{figure}[ht]
  \centering
  \raisebox{-0.4747405480575104\height}{\includegraphics[scale=0.7]{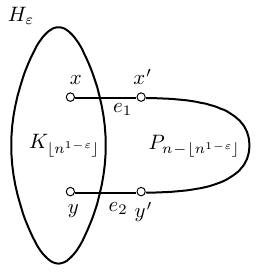}}
  \caption{A counterexample to Conjecture
\ref{MinSize3/2}}
  \label{pasting}
\end{figure}

Firstly, we count the size of $G_{\epsilon}$ as follows:
\begin{align*}
  e (G_{\epsilon})&= e( K_{_{\left\lfloor n^{1 - \epsilon} \right\rfloor}}) + e (P_{n - \left\lfloor n^{1 - \epsilon} \right\rfloor})\\
  &=\frac{1}{2} \left\lfloor n^{1 - \epsilon} \right\rfloor (\left\lfloor n^{1 -
  \epsilon} \right\rfloor - 1) + (n - \left\lfloor n^{1 - \epsilon} \right\rfloor + 1)\\
  & \geqslant  \frac{1}{2} (n^{1 - \epsilon} - 1) (n^{1 - \varepsilon} - 2)
  + (n - n^{1 - \varepsilon} + 1)\\
  & =  \frac{1}{2} n^{2 - 2 \varepsilon} + n - \frac{5}{2} n^{1 -
  \varepsilon} + 2\\
  & =  \frac{1}{2} n^{2 - 2 \varepsilon} + O (n).
\end{align*}
Since $\varepsilon < \frac{1}{4}$, $e (G_{\varepsilon}) > \frac{1}{2}
n^{\frac{3}{2}}$ for $n$ is sufficiently large.

Next, we count all the orders of $2$-connected subgraphs of $G_{\varepsilon}$.

\begin{thm}
$G_{\varepsilon}$ contains a $2$-connected subgraph of order $k$ for each $k
  \in \{ 3, 4, \ldots, \left\lfloor n^{1 - \epsilon} \right\rfloor \} \cup \{ n - \left\lfloor
  n^{1 - \epsilon} \right\rfloor + 2, n - \left\lfloor n^{1 - \epsilon} \right\rfloor + 3,
  \ldots, n \}$.
\label{SubgraphOfHe}
\end{thm}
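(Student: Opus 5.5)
Write $t=\lfloor n^{1-\epsilon}\rfloor$, let $K$ be the clique $K_t$ in $G_\epsilon$, and let $P=x'\cdots y'$ be the path on $n-t$ vertices. The statement only asserts existence, so the plan is to exhibit an explicit $2$-connected subgraph of each listed order. The orders split into a small range, handled inside $K$, and a large range, handled by subgraphs that contain all of $P$.

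For $k\in\{3,\dots,t\}$ there is nothing to prove beyond the observation that any $k$ vertices of $K$ induce $K_k$. Since $k\ge 3$, this graph is $2$-connected.

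For the large range, let $C$ be the cycle formed by $x\,x'\,P\,y'\,y\,x$, that is, the path $P$, the two edges $e_1,e_2$, and the edge $xy$ of $K$. It has order $n-t+2$ and is $2$-connected, which covers $k=n-t+2$. For $k=n-t+2+j$ with $1\le j\le t-2$, we choose $j$ further vertices $z_1,\dots,z_j\in V(K)\setminus\{x,y\}$ and take $H_j=G_\epsilon[V(C)\cup\{z_1,\dots,z_j\}]$. This is $C$ together with a clique on $\{x,y,z_1,\dots,z_j\}$. To see that $H_j$ is $2$-connected, note that $H_j$ is obtained from the cycle $C$ by repeatedly adding an ear $x z_i y$, which is a path of length two whose internal vertex is new and whose ends are distinct old vertices. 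Adding further edges afterwards does not destroy $2$-connectivity. Whitney's ear decomposition therefore gives the claim. Alternatively, one can argue directly: removing any single vertex leaves the remaining clique vertices connected to each other and to what remains of $C$. When $j=t-2$ we obtain $G_\epsilon$ itself, of order $n$, so every $k$ from $n-t+2$ to $n$ is realized.

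There is no real obstacle; the only care needed is in checking the endpoints of the two ranges. For $k=3$ we use a triangle in $K$, which requires $t\ge 3$; this holds for $n$ large. For the largest order $n$, the whole graph $G_\epsilon$ is $2$-connected by the ear argument above. The more delicate companion statement, presumably proved next, would show that no other orders occur, since any $2$-connected subgraph meeting the interior of $P$ must contain all of $P$. That is not needed for the present theorem.
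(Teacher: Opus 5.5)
Your proof is correct and uses the same constructions as the paper: $k$ clique vertices for the small range, and $G_\varepsilon[V(P)\cup\{x,y\}]$ (your cycle $C$) enlarged by a set $W\subseteq V(K)\setminus\{x,y\}$ for the large range. Your ear-decomposition argument supplies the $2$-connectivity check that the paper only asserts, and the paper also folds into this proof the observation you defer, namely that any $2$-connected subgraph meeting $P$ contains all of $P$ together with $x,y$, which is what the subsequent counterexample actually relies on.
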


\begin{proof}
Let $G'$ be a $2$-connected subgraph of $G_{\varepsilon}$. Suppose that $G'$ is a subgraph of $K_{_{\left\lfloor n^{1 - \epsilon} \right\rfloor}}$. Since $K_{_{\left\lfloor
  n^{1 - \epsilon} \right\rfloor}}$ is a complete subgraph of $G_{\varepsilon}$, it
  is easy to check that the order of $G'$ can be taken in $\{ 3, \ldots,
  \left\lfloor n^{1 - \epsilon} \right\rfloor \}$.
  
      If $G'$ contains at least one vertex in $P_{n - \left\lfloor n^{1 - \epsilon}
  \right\rfloor}$, then $G'$ must contain $x, y$ and all vertices of $P_{n -
  \left\lfloor n^{1 - \epsilon} \right\rfloor}$. Thus, the order of $G'$ is at least $n -
  \left\lfloor n^{1 - \epsilon} \right\rfloor + 2$. If the order of $G'$ is $n - \left\lfloor
  n^{1 - \epsilon} \right\rfloor + 2$, then $G' \cong G_{\varepsilon} [V (P_{n -
  \left\lfloor n^{1 - \epsilon} \right\rfloor}) \cup \{ x, y \}]$. Let $\ell \in [n -
  \left\lfloor n^{1 - \epsilon} \right\rfloor + 3, n]$ be an integer. \ Since
  $K_{_{\left\lfloor n^{1 - \epsilon} \right\rfloor}}$ is a complete subgraph of
  $G_{\varepsilon}$, we can form a $2$-connected subgraph $G_{\ell} \cong
  G_{\varepsilon} [V (G') \cup W]$ of order $\ell$ in $G_{\varepsilon}$ from
  $G'$ and a vertex subset $W \subseteq V \left( K_{_{\left\lfloor n^{1 - \epsilon}
  \right\rfloor}} \right) \backslash \{ x, y \}$ of order $\ell - n + \left\lfloor n^{1 -
  \epsilon} \right\rfloor - 2$. Thus, the order of $G'$ can be taken in $\{ n -
  \left\lfloor n^{1 - \epsilon} \right\rfloor + 2, n - \left\lfloor n^{1 - \epsilon} \right\rfloor +
  3, \ldots, n \}$.
  
Therefore, $G_{\varepsilon}$ contains a $2$-connected subgraph of order $k$ for each $k \in \{ 3, 4, \ldots, \left\lfloor n^{1 - \epsilon} \right\rfloor \} \cup \{
  n - \left\lfloor n^{1 - \epsilon} \right\rfloor + 2, n - \left\lfloor n^{1 - \epsilon}
  \right\rfloor + 3, \ldots, n \}$. The proof is complete.
\end{proof}

Using Theorem \ref{SubgraphOfHe}, it is easy to check that 
$G_{\varepsilon}$ does not satisfy the conclusion of Conjecture \ref{MinSize3/2}. Since
$\varepsilon < \frac{1}{4}$, $n - \left\lfloor n^{1 - \epsilon} \right\rfloor + 2 >
\left\lfloor n^{1 - \epsilon} \right\rfloor + 1$ if $n \geqslant 16$. Therefore, the
graph $G_{\varepsilon}$ for $n$ sufficiently large is a counterexample to
Conjecture \ref{MinSize3/2}. Moreover, for any $\varepsilon' < 2$, if we
modify the condition of Conjecture \ref{MinSize3/2} to the condition ``$m
\geqslant \frac{1}{2} n^{2 - \varepsilon'}$'', we can also construct a
counterexample by setting $\varepsilon < \frac{1}{2} \varepsilon'$ and
applying the same method described in this section.

A \textbf{design} is a pair $(X,\mathcal{A})$ such that the following properties are satisfied: (i) $X$ is a set of elements called points. (ii) $\mathcal{A}$ is a collection of nonempty subsets of X called blocks. 
A design is called a \textbf{Balanced Incomplete Block design} (BIBD) 
if $v$ points are arranged in $b$ blocks, each block containing $k (<v)$ points, every point occurs at the most once in a block and in exactly $r$ blocks ensuring that every pair of symbols occurs together in $\lambda$ blocks.
A BIBD with $b = v$ (or equivalently, $r = k$) is a  $(v,k,\lambda)$-\textbf{Symmetric Balanced Incomplete Block design} (i.e., $(v,k,\lambda)$-SBIBD). 

The following fact comes from Remark 6.14 in \cite[pp.~111]{CD06}.

\begin{lem}[Colbourn and Dinitz~\cite{CD06}]
\label{SBIBD_exist}
If $\lambda=2$, the only values of $k$ for which $(v,k,2)$-
SBIBD are known are $k = 4, 5, 6, 9, 11, 13$. 
\end{lem}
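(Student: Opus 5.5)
This statement is a citation of the state of knowledge recorded in the Handbook of Combinatorial Designs, so I will not derive it from first principles. The plan is to reduce it to the standard parameter relations for symmetric designs and then point to the known constructions. Since "the only known values" is an empirical claim about the literature, the honest argument is a citation of \cite{CD06} (Remark 6.14). What can be checked is that the listed cases are consistent and realised.

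\textbf{Step 1 (admissible parameters).} For a $(v,k,\lambda)$-SBIBD, double counting pairs through a fixed point gives $r(k-1)=\lambda(v-1)$. With $r=k$ and $\lambda=2$ this becomes
\[
v = 1+\frac{k(k-1)}{2}.
\]
So $k=4,5,6,9,11,13$ correspond to $v=7,11,16,37,56,79$ respectively. This is the relation the paper needs later, since the orders $n=2v$ used against Conjecture~\ref{Conj:2} are exactly $14,22,32,74,112,158$. The value $n=8$ would presumably come from the degenerate or small case $k=3$, $v=4$, the complete design.

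The Bruck--Ryser--Chowla theorem rules out further parameters. For example, when $v$ is even, $k-\lambda=k-2$ must be a perfect square. This explains why many small $k$ are excluded, although it does not settle the remaining open cases.

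\textbf{Step 2 (existence of each listed case).} I would cite or sketch a construction for each value:
\begin{itemize}
\item $k=4$: the complement of the Fano plane, a $(7,4,2)$ design.
\item $k=5$: the Paley biplane on $\mathbb{Z}_{11}$, whose blocks are translates of the quadratic residues $\{1,3,4,5,9\}$.
\item $k=6$: the three biplanes on $16$ points, for instance from the difference set $\{(0,1),(0,2),(0,3),(1,0),(2,0),(3,0)\}$ in $\mathbb{Z}_4^2$.
\item $k=9,11,13$: the known biplanes of orders $7,9,11$ on $37$, $56$ and $79$ points, due to Hall--Lane--Wales, Salwach--Mezzaroba, Aschbacher and others, as catalogued in \cite{CD06}.
\end{itemize}

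\textbf{Main obstacle.} The main obstacle is the word ``only''. It is not a theorem but the statement that no biplane with any other $k$ has been found, and existence for all larger admissible $k$ is a well-known open problem. So the final step is simply to invoke \cite[Remark~6.14]{CD06} for this claim, rather than to attempt a proof.
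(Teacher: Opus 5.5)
Your proposal is correct and takes the same approach as the paper, which gives no argument of its own and simply imports the statement from Remark 6.14 of \cite{CD06}; your supplementary checks ($v=1+k(k-1)/2$, the Bruck--Ryser--Chowla remark, and the sample constructions such as the $(16,6,2)$ difference set in $\mathbb{Z}_4^2$) are all accurate. Your guess about $n=8$ is also right: $Q_3$ is $K_{4,4}$ minus a perfect matching, i.e.\ the incidence graph of the trivial $(4,3,2)$ design, which is presumably why $k=3$ is left out of the listed values.
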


\begin{thm}\label{Thm:2.2}
The statement ``Let $G$ be an $n$-vertex graph. If $\delta(G)\geqslant \sqrt{n}$ then $G$ contains a $C_5$ or a $K_{2,3}$"  
is false for each $n\in \{8,14,22,32,74,112,158\}$.
\end{thm}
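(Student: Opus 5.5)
The plan is to use the incidence graph of a $(v,k,2)$-SBIBD. The values $n\in\{8,14,22,32,74,112,158\}$ should be $n=2v$, where $v$ is the number of points of a $(v,k,2)$-SBIBD. For a symmetric design the standard relation $\lambda(v-1)=k(k-1)$ gives $v=1+k(k-1)/2$. Taking $k=4,5,6,9,11,13$, the values listed in Lemma~\ref{SBIBD_exist}, yields $v=7,11,16,37,56,79$, so $n=2v=14,22,32,74,112,158$. The case $n=8$ presumably comes from the degenerate $k=3$, $v=4$ design whose blocks are all $3$-subsets of a $4$-set. Its incidence graph is $K_{4,4}$ minus a perfect matching, i.e.\ the cube $Q_3$, with $\delta=3\ge\sqrt8$. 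For that case I would check directly that $Q_3$ has no $C_5$ (it is bipartite) and no $K_{2,3}$ (two vertices of $Q_3$ share at most $2$ neighbours).

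\textbf{Construction.} Let $G$ be the bipartite incidence graph of a $(v,k,2)$-SBIBD: one side is the $v$ points, the other side is the $v$ blocks, and a point is adjacent to a block when it lies in it. Then $G$ is $k$-regular on $n=2v$ vertices. The degree condition $k\ge\sqrt{2v}=\sqrt{k^2-k+2}$ is equivalent to $k\ge2$, so it always holds.

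\textbf{No $C_5$.} This is immediate because $G$ is bipartite.

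\textbf{No $K_{2,3}$.} In a bipartite graph, a copy of $K_{2,3}$ must have its 2-part inside one colour class and its 3-part inside the other. So it suffices to show that any two vertices on the same side have exactly $2$ common neighbours.
\begin{itemize}
\item Two points have $\lambda=2$ common blocks, by the definition of the design.
\item Two blocks meet in exactly $\lambda=2$ points. This uses the standard fact that in a symmetric design any two blocks meet in $\lambda$ points, because the dual of an SBIBD is again an SBIBD with the same parameters.
\end{itemize}
Since $2<3$, no $K_{2,3}$ exists.

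\textbf{Main obstacle.} The main subtlety is matching the listed $n$ to designs, in particular handling $n=8$ via $Q_3$ (or $k=3$). A second point is justifying the block-intersection property. If needed I would prove it quickly with the incidence matrix $N$. From $NN^T=(k-\lambda)I+\lambda J$ and $NJ=JN=kJ$, the matrix $N$ is invertible, and one deduces $N^TN=NN^T$. This gives the dual property without citing anything beyond Lemma~\ref{SBIBD_exist}.
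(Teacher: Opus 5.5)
Your proposal is correct and follows essentially the same route as the paper. You use the incidence graph of a $(v,k,2)$-SBIBD for $k\in\{4,5,6,9,11,13\}$, which is bipartite (so no $C_5$) and in which any two vertices share $0$ or $2$ neighbours (so no $K_{2,3}$), and you handle $n=8$ with $Q_3$; your incidence-matrix proof that two blocks meet in exactly $\lambda$ points, and your remark that $Q_3$ is the incidence graph of the trivial $(4,3,2)$ design, supply details the paper only asserts.
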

\begin{proof}
Consider the incident graph $G_n$ of $(v,k,2)-$SBISD. Then $G_n$ is a bipartite graph and hence contains no $C_5$. For any two vertices in $G_n$, they have either 2 or 0 common neighbors,
and hence contain no $K_{2,3}$. By Lemma \ref{SBIBD_exist}, $(v,k,2)-$SBISD  exists
for each
$k\in \{4,5,6,9, 11,13\}$.

It is known that $v=\frac{k(k-1)}{2}+1$ and $v(G_n)=2v=k^2-k+2\in \{14,22,32,74,112,158\}$.
Thus, $\delta(G)=k>\sqrt{v(G_n)}$. For $n=8$, we consider a hypercube graph $Q_3 = C_4 \Box K_2$ (see
Fig. \ref{HypergraphQ3}). Then $\delta (Q_3) = 3 > \sqrt{8}$.
\end{proof}

\begin{figure}[ht]
  \centering
  \raisebox{-0.4747405480575104\height}{\includegraphics[scale=0.7]{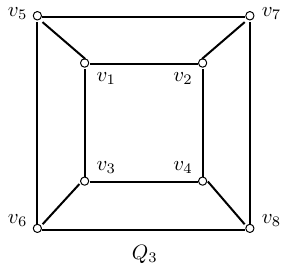}}
  \caption{A hypercube $Q_3=C_4 \Box K_2$}
  \label{HypergraphQ3}
\end{figure}

One can see that if Conjecture \ref{Conj:2} is true, the graph $G$ surely contains a 2-connected subgraph of order 5, and hence contains a $C_5$ or a $K_{2,3}$. So Theorem \ref{Thm:2.2} immediately has the following,
which shows that Conjecture
\ref{MinDegSqrt} is false for a few of small values of $n$. 

\begin{cor}\label{Q3Subgraph}
Conjecture \ref{Conj:2} is false for each $n\in \{8,14,22,32,74,112,158\}$. 
\end{cor}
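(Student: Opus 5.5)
The corollary should follow by combining Theorem \ref{Thm:2.2} with a structural observation about $2$-connected graphs on five vertices. For each $n\in\{8,14,22,32,74,112,158\}$ we take the graph from the proof of Theorem \ref{Thm:2.2}: the cube $Q_3$ when $n=8$, and otherwise the incidence graph $G_n$ of a $(v,k,2)$-SBIBD with $k\in\{4,5,6,9,11,13\}$. We then check three things: $G_n$ satisfies the hypotheses of Conjecture \ref{Conj:2}, the conjecture would force a $2$-connected subgraph of order $5$, and such a subgraph must contain $C_5$ or $K_{2,3}$, which Theorem \ref{Thm:2.2} rules out.

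\emph{Hypotheses.} We have $\delta(G_n)=k>\sqrt{2v}=\sqrt{k^2-k+2}$, and $\delta(Q_3)=3>\sqrt 8$, so only $2$-connectivity needs an argument. For $Q_3$ it is immediate, since $Q_3$ is $3$-connected.

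For the incidence graph, we argue directly. Any two points lie together in $\lambda=2$ blocks, and by symmetry of the design any two blocks share exactly $2$ points. Hence any two vertices on the same side have two common neighbours. Now delete a vertex $w$. Two vertices on the same side still have a common neighbour, since at most one of their two common neighbours was $w$. Every vertex keeps at least $k-1\ge 3$ neighbours, so each vertex on one side is adjacent to the other side. Therefore $G_n-w$ is connected, and $G_n$ is $2$-connected.

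\emph{Reduction to order five.} Suppose Conjecture \ref{Conj:2} held for $G_n$. Taking the value $5\in\{4,\dots,n\}$, we would get a $2$-connected subgraph $H$ with $|V(H)|=5$. A $2$-connected graph has an ear decomposition starting from a cycle. If that cycle has length $5$, then $H\supseteq C_5$. If it has length $4$, say $abcd$, the fifth vertex $x$ enters through an ear, which must be a path $u\,x\,u'$ with $u,u'$ distinct cycle vertices. If $u,u'$ are opposite, say $a,c$, then $\{a,c\}$ versus $\{b,d,x\}$ gives $K_{2,3}$. If they are adjacent, say $a,b$, then $a\,x\,b\,c\,d\,a$ is a $C_5$. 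If the initial cycle is a triangle $abc$, the remaining two vertices attach through ears. The possible shapes are a path $a\,x\,y\,b$, giving $C_5$; or two single-vertex ears. In the latter case either some ear uses an edge to create a longer cycle first, reducing to the $C_4$ case, or both ears attach to a pair of triangle vertices, which again yields a $4$-cycle and falls into the earlier case.

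So in every case $H$, and hence $G_n$, contains $C_5$ or $K_{2,3}$, contradicting Theorem \ref{Thm:2.2}. The main obstacle is making this five-vertex case analysis airtight. A cleaner route: take a longest cycle $C$ in $H$. If $|C|=5$ we are done. If $|C|=3$, then $2$-connectivity forces a $4$-cycle to exist, a contradiction, so $|C|=4$ and the fifth vertex has two neighbours on $C$ that must be opposite (otherwise $C$ extends to a $C_5$), giving $K_{2,3}$.
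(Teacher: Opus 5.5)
Your proposal is correct and follows the paper's route: the paper derives the corollary directly from Theorem~\ref{Thm:2.2} by noting that a $2$-connected subgraph of order $5$ must contain a $C_5$ or a $K_{2,3}$, which is exactly your reduction. You go slightly further than the paper in two ways: you verify that $Q_3$ and the SBIBD incidence graphs are $2$-connected, a hypothesis of Conjecture~\ref{Conj:2} that the paper leaves implicit, and you actually prove the five-vertex fact, for which your longest-cycle argument is airtight and preferable to the looser ear-decomposition sketch.
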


\section{Proof of Theorem \ref{MyMinDeg}}\label{Refinement}

We will use the following  results.

\begin{thm}[Reiman~\cite{Reiman-702}]
  Any simple $n$-vertex graph $G$ with $\sum_{v \in V} 
  \binom{d (v)}{2}> \binom{n}{2}$ contains a cycle of length $4$.\label{reiman}
\end{thm}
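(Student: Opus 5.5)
The plan is a double-counting argument on "cherries", i.e.\ paths of length two. We count triples $(v,\{a,b\})$ in which $v\in V(G)$ and $\{a,b\}$ is an unordered pair of distinct neighbours of $v$. Counting by the middle vertex $v$, the number of such triples is exactly $\sum_{v\in V}\binom{d(v)}{2}$, since every $2$-subset of $N(v)$ contributes one triple.

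Next we count by the unordered pair $\{a,b\}$. Suppose, for contradiction, that $G$ contains no cycle of length $4$. We claim that each pair $\{a,b\}$ of distinct vertices lies in at most one triple. Suppose instead that two distinct vertices $v\neq w$ are both common neighbours of $a$ and $b$. A vertex is never its own neighbour in a simple graph, so $v,w\notin\{a,b\}$. Hence $a,v,b,w$ are four distinct vertices, and $avbwa$ is a cycle of length $4$, which is a contradiction. So every one of the $\binom{n}{2}$ pairs is counted at most once, and the total number of triples is at most $\binom{n}{2}$.

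Comparing the two counts gives $\sum_{v\in V}\binom{d(v)}{2}\le\binom{n}{2}$, which contradicts the hypothesis. Therefore $G$ contains a $C_4$. The only delicate point is checking that the four vertices are distinct, so that we obtain a genuine $4$-cycle in a simple graph. This follows from simplicity together with $v\neq w$, so no step here is a real obstacle.
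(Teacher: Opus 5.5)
Your double-counting argument is correct and complete: counting cherries by their centre gives $\sum_{v}\binom{d(v)}{2}$, and $C_4$-freeness forces $|N(a)\cap N(b)|\le 1$ for every pair $\{a,b\}$, so the total is at most $\binom{n}{2}$. The paper gives no proof of its own (it cites Reiman and uses the result as a black box to find a $C_4$ in the proof of Theorem~\ref{MyMinDeg}), and your argument is the standard proof of this bound.
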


A graph $G$ is critically $h$-connected if the connectivity $\kappa(G)=h$ 
and $\kappa(G-v)<h$ for any vertex $v\in V(G)$.

\begin{thm}(\rm {Hamidoune~\cite[Theorem 3.5]{HAMIDOUNE1980257}})
Every critically $h$-connected graph $G$ contains
two vertices of degree not exceeding $\frac{3h}{2}-1$.
\end{thm}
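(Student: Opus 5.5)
The plan follows the classical ``atom'' argument of Chartrand, Kaugars and Lick, sharpened to produce two vertices. Let $G$ be critically $h$-connected. A \emph{fragment} is the vertex set of a union of (at least one but not all) components of $G-S$ for some minimum separator $S$ ($|S|=h$). Its \emph{complement} is $\overline{F}=V(G)\setminus(F\cup S)$, which is again a fragment. An \emph{atom} is a fragment of minimum cardinality. Two standard facts, proved by counting, will be used throughout.

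\begin{enumerate}[(i)]
\item If $F$ (with separator $S$) and $F'$ (with separator $T$) are fragments with $F\cap F'\neq\emptyset$ and $\overline{F}\cap\overline{F'}\neq\emptyset$, then $F\cap F'$ is a fragment. Indeed $N(F\cap F')\subseteq (F\cap T)\cup(S\cap T)\cup(S\cap F')$, and symmetrically for $\overline F\cap\overline{F'}$. The two neighbourhood sizes sum to at most $|S|+|T|=2h$, and each is at least $h$, so both equal $h$.
\item Consequently, an atom $A$ meets no fragment $F'$ without being contained in it; in particular, distinct atoms are disjoint.
\end{enumerate}

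The main step is to show that an atom $A$ with separator $S$ satisfies $|A|\le h/2$. Criticality is used here: for $x\in A$, $\kappa(G-x)<h$, so $x$ lies in some minimum separator $T$, with sides $C$ and $D=\overline C$. Then $A\not\subseteq C$ and $A\not\subseteq D$. By (i) and the minimality of $A$, $A\cap C\neq\emptyset$ forces $\overline A\cap D=\emptyset$, and similarly with $C$ and $D$ interchanged. A case check on which of the four corners $A\cap C$, $A\cap D$, $B\cap C$, $B\cap D$ (with $B=\overline A$) are empty, together with $|N(\cdot)|\ge h$ for every nonempty corner whose opposite corner is nonempty, should give $A\cap C=A\cap D=\emptyset$. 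This means $A\subseteq T$.

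Then I would compare sizes. $C$ and $D$ are fragments, so $|C|,|D|\ge|A|$. Since $C\subseteq S\cup B$ and the corners $B\cap C$, $B\cap D$ are controlled by the previous counting, I would obtain $|A|+|A\cap T|\le|T|=h$, i.e.\ $2|A|\le h$. Every $x\in A$ has $N(x)\subseteq (A\setminus\{x\})\cup S$, so
\[
d(x)\le |A|-1+h\le \tfrac{h}{2}-1+h=\tfrac{3h}{2}-1 .
\]

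For the second vertex, I would apply the same reasoning inside the complement $B=\overline A$. Take a fragment of minimum size among those contained in $B$ (or among those disjoint from $A$), and run the same corner argument relative to $A$ and $T$. This should give a set of size at most $h/2$ whose vertices have all their neighbours in it or in a separator of size $h$. If $|A|=1$ and $B$ is also small, one can simply take the unique atom vertex together with a vertex of such a fragment in $B$. Fact (ii) guarantees that the two vertices obtained are distinct.

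I expect the main obstacle to be the four-corner case analysis showing $A\subseteq T$ and then $|A|\le h/2$. One must carefully handle the cases where a corner is empty, so that (i) cannot be applied and $|N(\cdot)|\ge h$ is not available. My first attempt there is to exploit the symmetry between $C$ and $D$ and use $|C|,|D|\ge|A|$ directly.
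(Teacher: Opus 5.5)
The paper gives no proof of this theorem; it is quoted from Hamidoune, so your plan has to stand on its own. Its first half is the classical Chartrand--Kaugars--Lick atom argument and is fine in outline. Once $A\subseteq T$, consider the side $C$. Either $C\subseteq S$, so $|S\cap C|=|C|\ge|A|$. Or $N(B\cap C)$ is a separator whose far side contains $A$, so $|S\cap C|\ge|A\cap T|=|A|$. The same holds for $D$, and $2|A|\le|S|=h$ follows. Two small points. First, (ii) is not a direct consequence of (i): when the opposite corner is empty you need exactly the extra counting you postponed. Second, once (ii) is established, $A\subseteq T$ is immediate, since $x\in A\cap T$ rules out $A\subseteq C$ and $A\subseteq D$.

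The genuine gap is the second vertex, which is the whole content of Hamidoune's improvement. Your bound $d(x)\le h+|A|-1$ already gives two vertices whenever $|A|\ge 2$ or there are two atoms. So the only case that matters is a unique atom $A=\{a\}$ with $d(a)=h$. For it you only assert that a minimum fragment $F$ inside $B$ (or avoiding $A$) ``should'' satisfy $|F|\le h/2$ by the same argument, and this does not follow.

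The corner step survives, since sub-fragments of $F$ stay in your family. But the comparisons that close the atom argument used that $A$ is minimum among \emph{all} fragments. Take $x\in F$ and a minimum separator $T'\ni x$ with sides $C',D'$.
\begin{itemize}
\item If $F$ meets both $C'$ and $D'$, you get $\overline F\subseteq T'$ and $|\overline F|\le|F\cap T'|<|F|$. This is no contradiction when $a\in\overline F$, because then $\overline F$ lies outside your family. That always happens for $F\subseteq B$, since $\overline F\supseteq\overline B=\{a\}$; already $F=B$ has $\overline F=\{a\}$.
\item If $F$ meets only $C'$, you get $D'\subseteq N(F)$ with $|D'|\le|F\cap T'|<|F|$. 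This contradicts minimality only if $D'$ is itself in your family. But $D'$ may meet $S$ (for the family inside $B$), or contain $a$ (for the family avoiding $A$, when $a\in N(F)$).
\end{itemize}
The same obstruction reappears in the final count, where a side of $T'$ swallowed by $N(F)$ must be shown to have at least $|F|$ vertices.

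Restricting to fragments with $a\notin F$ and $|F|\le|\overline F|$ is still closed under the corner operation, because $\overline{F\cap C'}\supseteq\overline F$. It removes the first failure but not the second. So a genuinely new ingredient is needed to handle fragments that contain or are adjacent to $a$. As it stands, the proposal proves only the one-vertex theorem of Chartrand, Kaugars and Lick.
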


\begin{cor}\label{CritMin}
In any critically $2$-connected graph, there are at least two vertices of
degree $2$.
\end{cor}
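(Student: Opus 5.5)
The corollary is the special case $h=2$ of Hamidoune's theorem. Take a critically $2$-connected graph $G$, so that $\kappa(G)=2$. Hamidoune's theorem gives two distinct vertices $u,v$ with
\[
d(u),d(v)\le \frac{3\cdot 2}{2}-1=2 .
\]

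Next we bound the degrees from below. A $2$-connected graph has minimum degree at least $2$, since $\delta(G)\ge\kappa(G)$. We check this directly: if some vertex $w$ had $d(w)\le 1$, then deleting its at most one neighbour would either isolate $w$ from the rest of the graph or leave $w$ as a cut vertex's only companion. Either outcome contradicts $\kappa(G)=2$, because a $2$-connected graph has at least $3$ vertices and no cut set of size at most $1$.

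So $d(u)=d(v)=2$, and these are the two required vertices of degree $2$.

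The proof has essentially no obstacle. The only point to keep in view is that Hamidoune's theorem supplies two \emph{distinct} vertices, which is exactly what the statement needs, together with the standard inequality $\delta(G)\ge\kappa(G)$.
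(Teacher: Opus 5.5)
Your proof is correct and follows the paper's route: the corollary is the case $h=2$ of Hamidoune's theorem, where $\frac{3h}{2}-1=2$, combined with $\delta(G)\ge\kappa(G)=2$. The paper leaves that lower bound implicit, and your direct check of it is sound: if $d(w)=1$, the unique neighbour of $w$ is a cut vertex because $|V(G)|\ge 3$. The phrase ``a cut vertex's only companion'' would be clearer replaced by exactly that statement.
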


\begin{lem}(\rm {Yin and Wu \cite[Lemma~3.5]{YinWu-DAM-2026}})
  Let $G$ be a $2$-connected graph of order $n \geqslant 5$ and $V' \subseteq
  V (G)$ with $2 \leqslant | V' | \leqslant 3$. If $d (v) \geqslant \frac{n +
  3}{2}$ for any vertex $v \in V (G) \backslash V'$, then there exist two
  vertices $v_1$ and $v_2$ such that $| V' \backslash \{ v_1, v_2 \} |
  \geqslant 2$ and $G -\{v_1,v_2\}$ satisfies one of the following additional
properties:\label{Removable}
  \begin{enumerate}
    \item[(i)] $G -\{v_1,v_2\}$ is still $2$-connected;\label{keep}
    
    \item[(ii)] $G -\{v_1,v_2\}$ has exactly two blocks, one of which is isomorphic
    to $K_2$ whose the pendent vertex belongs to $V'$ and the cut vertex do
    not belong to $V'$; and \label{TwoBlock}
    
    \item[(iii)] $G -\{v_1,v_2\}$ consists of exactly three blocks with two cut
    vertices, in which the two end blocks are isomorphic to $K_2$ each and the
    two pendent vertices belongs to $V'$, but neither of the two cut vertices
    belong to $V'$.\label{ThreeBlock}
  \end{enumerate}
\end{lem}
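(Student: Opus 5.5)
Since this lemma is quoted from Yin and Wu, I would reprove it by choosing the removable pair \(v_1,v_2\) outside \(V'\). Call a vertex \emph{rich} if it lies outside \(V'\); every rich vertex has degree at least \(\frac{n+3}{2}\). The requirement \(|V'\setminus\{v_1,v_2\}|\ge 2\) allows removing at most one vertex of \(V'\) when \(|V'|=3\), and none when \(|V'|=2\). I will try to take both removed vertices rich.

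\textbf{Degree counts after deletion.} Any two rich vertices \(u,w\) satisfy \(d(u)+d(w)\ge n+3\), so they have at least \(5\) common neighbours, since \(|N(u)\cap N(w)|\ge d(u)+d(w)-(n-2)\) when \(uw\notin E\) and similarly otherwise. After deleting any two vertices, each rich vertex keeps degree at least \(\frac{n+3}{2}-2=\frac{(n-2)-1}{2}+\ldots\), which is more than half of \(n-2\) minus a constant. Two consequences follow.
\begin{itemize}
\item[(a)] In \(H=G-\{v_1,v_2\}\), all remaining rich vertices lie in one block. Indeed, any two of them have a common neighbour in \(H\), since they had at least \(5\) common neighbours in \(G\) and at most \(2\) were deleted. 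Moreover, a cut vertex separating two rich vertices is impossible, because they have at least \(3\) common neighbours after deletion.
\item[(b)] The only vertices that can lie outside this big block are vertices of \(V'\), so at most \(3\) of them.
\end{itemize}
Hence the structure of \(H\) is a big \(2\)-connected block \(B\) containing all rich vertices, together with a few pendant pieces made only of \(V'\)-vertices.

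\textbf{Choice of the pair and case analysis.} Connectivity of \(H\) itself will come from the fact that each \(V'\)-vertex has degree at least \(2\) in \(G\) by \(2\)-connectivity. I would pick \(v_1,v_2\) rich so as to minimise the number of \(V'\)-vertices that become pendant or separated; this is possible because there are \(n-3\ge 2\) rich vertices. The cases are then:
\begin{itemize}
\item[1.] If every \(V'\)-vertex keeps at least two neighbours in \(H\) attaching it to \(B\), then \(H\) is \(2\)-connected, which is case (i).
\item[2.] If some \(V'\)-vertex \(a\) loses all but one neighbour \(c\), then \(c\) is a cut vertex and the block \(ac\cong K_2\) appears. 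If \(c\) is rich, this is case (ii). If \(c\in V'\), I would re-choose \(v_1,v_2\) among the rich vertices, using the large common neighbourhoods, so as to avoid killing \(a\)'s edges. The freedom here comes from there being many rich vertices.
\item[3.] Two pendant \(V'\)-vertices attached at distinct rich cut vertices give case (iii).
\end{itemize}
The forbidden configurations are a \(V'\)-vertex becoming isolated, a path of \(V'\)-vertices hanging off \(B\), or a cut vertex lying in \(V'\). I would rule these out by swapping one of \(v_1,v_2\) for another rich vertex, or, when \(|V'|=3\), by removing one \(V'\)-vertex itself.

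\textbf{Main obstacle.} The hardest step is the small-configuration analysis when the \(V'\)-vertices have low degree and are adjacent to each other, for instance \(V'\) inducing a path attached to \(G\) only through \(v_1,v_2\). Here I would choose \(v_1,v_2\) not adjacent to the low-degree \(V'\)-vertices whenever possible, falling back on deleting a \(V'\)-vertex of degree \(2\). I expect the bookkeeping for \(n=5,6\) to need direct checking.
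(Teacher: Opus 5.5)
The paper only quotes this lemma from Yin and Wu, so your sketch has to stand on its own. Its decisive step is missing: you assert that some pair $v_1,v_2$ avoids every bad configuration (``re-choose \dots\ the freedom comes from there being many rich vertices''), but there is no counting behind it. Your list of bad configurations is also incomplete. Two or three $V'$-vertices pendant at the \emph{same} rich cut vertex is excluded by (iii), which needs two distinct cut vertices.

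In fact, deleting only rich vertices fails outright for small $n$. Take $G=K_6-E(K_3)$ with $V'$ the independent triple. Each rich vertex has degree $5\ge 9/2$. Yet deleting any two of the three rich vertices leaves $K_{1,3}$, which satisfies none of (i)--(iii). Only deleting one rich vertex together with one vertex of $V'$ works; it leaves $K_4$ minus an edge. Here your ``big $2$-connected block $B$'' does not exist, since a single rich vertex survives.

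There are two smaller errors. First, adjacent rich vertices are only guaranteed $3$ common neighbours, so $1$ after deletion, not $5$. A triangle still puts them in one block, so (a) survives, but the count is wrong. Second, connectivity of $H$ does not follow from $d_G\ge 2$ on $V'$: a degree-$2$ vertex of $V'$ becomes isolated when both its neighbours are deleted.

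What you need is an explicit argument that a good pair exists. For large $n$ it is short:
\begin{itemize}
\item Let $L$ be the set of vertices of $V'$ of degree at most $5$, and let $R=V(G)\setminus V'$. For $n\ge 20$ you get $|R\setminus N(L)|\ge n-3-15\ge 2$, so take $v_1,v_2\in R\setminus N(L)$.
\item All surviving rich vertices lie in one block $B$ of $H=G-\{v_1,v_2\}$ with at least $3$ vertices.
\item Each vertex of $V'\setminus L$ keeps at least $4$ neighbours, at most $2$ of them in $V'$, so it has two neighbours in $B$ and lies in $B$.
\item Any component $C$ of $H-V(B)$ lies in $L$, so its neighbours outside $C$ are the same in $G$ as in $H$. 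By maximality of $B$ there is at most one such neighbour, which contradicts the $2$-connectivity of $G$. Hence $H$ is $2$-connected.
\end{itemize}
For the remaining small orders you need a genuine case analysis. It should use that each rich vertex has at most $(n-5)/2$ non-neighbours, and it must allow deleting a vertex of $V'$ when $|V'|=3$, which the example above shows is sometimes forced. ``Direct checking for $n=5,6$'' does not cover this range.
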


Now we begin to prove Theorem \ref{MyMinDeg}.

\begin{proof}[Proof of Theorem \ref{MyMinDeg}]
  For $4\leqslant n \leqslant 15$, $\delta (G) \geqslant \left\lceil \frac{n}{4} \right\rceil + 2$
  implies that $\delta (G) \geqslant \left\lceil \frac{n}{3} \right\rceil + 1$. By
  Theorem \ref{MinDegn3+1}, the result holds. So, we assume that $n \geqslant
  16$ in what follows.
  
Since
\begin{align*}
    \sum_{v \in V}
    \binom{d (v)}{2} \geqslant & n \binom{\frac{n}{4} + 2}{2} \\
    = & \frac{1}{2} n \left( \frac{n}{4} + 2 \right) \left( \frac{n}{4} + 1
    \right) - \frac{1}{2} n (n - 1) + \binom{n}{2} \\
    >& \frac{1}{32} n (n - 2)^2 + \binom{n}{2} \\
    > & \binom{n}{2},
  \end{align*}
by Theorem \ref{reiman}, $G$ contains a $C_4$, i.e., a $2$-connected subgraph
  of order $4$.
  
  Next, let $k \geqslant 5$, and we shall show that $G$ contains a $2$-connected
  graph of order $k$ by induction on $k$. Let $H_{k - 1}$ be a $2$-connected
  subgraph of $G$ with order $k - 1$. If there is a vertex $x \in V (G)
  \backslash V (H_{k - 1})$ such that $| N_G (x) \cap V (H_{k - 1}) |
  \geqslant 2$, then $G [V (H_{k - 1}) \cup \{ x \}]$ is a $2$-connected
  subgraph, say $H_k$, of order $k$ in $G$. Therefore, we always assume that there
  does not exist such a vertex in $V (G) \backslash V (H_{k - 1})$, i.e.
  \begin{equation}
    | N_G (x) \cap V (H_{k - 1}) | \leqslant 1 \text{ for any } x \in V (G)
    \backslash V (H_{k - 1}) . \label{NeighLess1}
  \end{equation}
  It implies that
  \begin{equation}
    e (V (H_{k - 1}), V (G) \backslash V (H_{k - 1})) \leqslant n - k + 1.
    \label{EdgeUpper}
  \end{equation}
  Since $\delta (G) \geqslant \left\lceil \frac{n}{4} \right\rceil + 2$ and by the assumption
  \eqref{NeighLess1}, we have
  \begin{equation}
    \delta (G - V (H_{k - 1})) \geqslant \left\lceil \frac{n}{4} \right\rceil + 1.
    \label{MinDegHk-1}
  \end{equation}
  \begin{case}\label{Case:1}
    $5 \leqslant k \leqslant \left\lceil \frac{n}{4} \right\rceil$.
  \end{case}
  Since $\delta (G) \geqslant \left\lceil \frac{n}{4} \right\rceil + 2$, for each vertex
  $v$ of $H_{k - 1}$,
  \[ e (v, V (G) \backslash V (H_{k - 1})) = | N_G (v) \backslash V (H_{k -
     1}) | \geqslant \left\lceil \frac{n}{4} \right\rceil + 2 - (k - 2) \geqslant
     \frac{n}{4} - k + 4. \]
  Since $5 \leqslant k \leqslant \left\lceil \frac{n}{4} \right\rceil$, $(k - 5) \left( k - \frac{n}{4}
  - 1 \right) \leqslant 0$, and thus
  \begin{align*}
    e (V (H_{k - 1}), V (G) \backslash V (H_{k - 1})) &=  \sum_{v \in V
    (H_{k - 1})} | N_G (v) \backslash N_{H_{k - 1}} (v) |\\
    & \geqslant  (k - 1) \left( \frac{n}{4} - k + 4 \right)\\
    & =  n - k + 1 - (k - 5) \left( k - \frac{n}{4} - 1 \right)\\
    & \geqslant  n - k + 1,
  \end{align*}
  where all equalities hold if and only if one of the following holds: (a) $k = 5$, $n \bmod 4 \equiv 0$, and $e (v, V (G)
  \backslash V (H_4)) = \frac{n}{4} - 1$ holds for each $v \in V (H_4)$; (b) $k=\frac{n}{4}+1$, $n \bmod 4 \equiv 0$, and $e (v, V (G)
  \backslash V (H_{\frac{n}{4}})) =3$ holds for each $v \in V (H_{\frac{n}{4}})$.
  Recall that $k\leqslant \left\lceil \frac{n}{4} \right\rceil$ for this case and so $k<\frac{n}{4}+1$. Thus, (b) cannot occur.

  Suppose that equalities hold, and hence (a) holds.
  By
  \eqref{NeighLess1}, for each $v \in V (G \backslash H_4)$, there exists a
  unique vertex $v' \in H_4$ such that $v$ and $v'$ are adjacent. Thus, we can
  divide the vertex set $V (G) \backslash V (H_4)$ into four subsets $V_1,
  V_2, V_3, V_4$ such that every subset $V_i$ is the set of neighbors of a
  vertex $h_i$ in $H_4$ ($i = 1, 2, 3, 4$) (see Fig. \ref{FourNeighbor}).
  Then the order of every subset is $\frac{n}{4} - 1$. Let $w_1 \in V_1$. Then we
  can compute $e (w_1, V (G) \backslash ( V(V_1) \cup V (H_4))
  \geqslant \frac{n}{4} + 2 - \left( \frac{n}{4} - 2 \right) - 1 = 3$; thus, there exists at least a neighbor $w_2$ of $w_1$ in $G-(V(H_4)\cup V_1)$. Without
  loss of generality, we can assume that $w_2 \in V_2$. Note that $d_{H_{_4}}
  (h_i) = 3$ for each $h \in \{ 1, 2, 3, 4 \}$, so $H_4 \cong K_4$. Therefore,
  $G \left[ V \left( {H_4}  \right) \cup \{ w_1, w_2 \} \backslash \{h_3\}
  \right]$ is a $2$-connected subgraph of order $5$.
  
  \begin{figure}[ht]
    \centering
  \raisebox{-0.4747405480575104\height}{\includegraphics[scale=0.7]{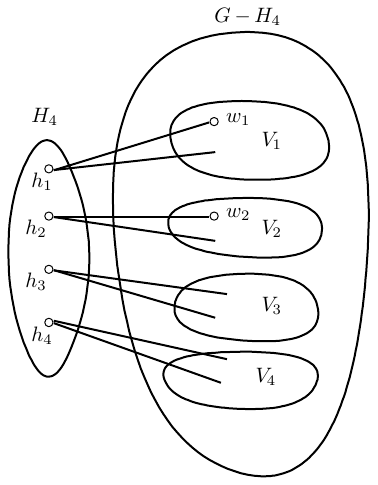}}
    \caption{Illustration for Case \ref{Case:1}}
    \label{FourNeighbor}
  \end{figure}
  
  For $6 \leqslant k \leqslant \left\lceil \frac{n}{4} \right\rceil$, $(k - 5) \left( k
  - \frac{n}{4} - 1 \right) < 0$, and hence $e (V (H_{k - 1}), V (G) \backslash V
  (H_{k - 1})) > n - k + 1$, a contradiction to (\ref{EdgeUpper}). It follows that there exists a
  vertex $x \in V (G) \backslash V (H_{k - 1})$ such that $| N_G (x) \cap V
  (H_{k - 1}) | > 2$. This enables us to find a $2$-connected subgraph of
  order $k$ in $G$.
  
  \begin{case}
    $\left\lceil \frac{n}{4} \right\rceil + 1 \leqslant k \leqslant \left\lceil \frac{n}{2}
    \right\rceil$.
  \end{case}
  Let $G'=G-V(H_{k-1})$.  The assumption $k \geqslant \left\lceil \frac{n}{4} \right\rceil + 1$ gives $| V (G')|\leqslant \frac{3 n}{4}$. By \eqref{MinDegHk-1}, we have $\delta (G') \geqslant \left\lceil \frac{n}{4} \right\rceil + 1.$ It follows that $\delta(G')\geqslant \frac{|V(G')|}{3}+1$. Recall $n\geqslant 16$, and this implies $|V(G')|\geqslant \left\lceil \frac{n}{4} \right\rceil +2>4$. By
  Theorem \ref{MinDegn3+1}, there is a $2$-connected subgraph of
  order $s$ in $G'$
  for each integer $s\in [4,|V(G')|]$.
Observe that $n-k+1\geqslant k$ as $k\leqslant \left\lceil \frac{n}{2}\right\rceil$. We know $G'$ contains a 2-connected subgraph of order $k$, which is also in $G$.

\begin{case}\label{Case:3}
$k = \left\lceil \frac{n}{2} \right\rceil + 1$.
\end{case}
  
The assumption $k = \left\lceil \frac{n}{2} \right\rceil + 1$ gives $\left| V (G)
  \backslash V (H_{k - 1}) = \left\lfloor \frac{n}{2} \right\rfloor \right|$. A simple
  computation shows that $N_{G - H_{k - 1}} (v) \cap N_{G - H_{k - 1}} (v')
  \neq \varnothing$ for any two different vertices $v, v' \in V (G) \backslash
  V (H_{k - 1})$. Let $V' = \{ v \in V (H_{k - 1}) : d_{H_{k - 1}} (v) < d_G
  (v) \}$. Let $a$ and $b$ be two vertices in $V'$. Select two different
  vertices $a' \in N_G (a) \backslash V (H_{k - 1})$ and $b' \in N_G (b)
  \backslash V (H_{k - 1})$ and let $c'$ be a vertex in $N_{G - V (H_{k - 1})}
  (a) \cap N_{G - V (H_{k - 2})} (b)$.
  
Let us consider two cases in terms of the order of $| V' |$.
  
First, suppose that $2 \leqslant | V' | \leqslant 3$.
Since $k = \left\lceil \frac{n}{2} \right\rceil + 1$, $| V (H_{k - 1}) | = \left\lceil
  \frac{n}{2} \right\rceil$. Thus, for any vertex $v \in V (H_{k - 1}) \backslash
  V'$, we have
  \[ d_{H_{k - 1}} (v) = d_G (v) \geqslant \left\lceil \frac{n}{4} \right\rceil + 2 =
     \frac{2 \left\lceil \frac{n}{4} \right\rceil + 4}{2} \geqslant \frac{\left\lceil
     \frac{n}{2} \right\rceil + 4}{2} > \frac{| V (H_{k - 1}) | + 3}{2} . \]
  By Lemma \ref{Removable}, there exist two vertices $v_1, v_2 \in V (H_{k -
  1})$ such that $| V' \backslash \{ v_1, v_2 \} | \geqslant 2$ and $H' = H_{k
  - 1}-\{v_1,v_2\}$ satisfy one of the conclusion
 (i)(ii) and (iii) (see Lemma \ref{Removable}). Then, $H_k = G [V (H') \cup
  \{ a', b', c' \}]$ is a $2$-connected subgraph of $G$ with order $k$.
  
Now suppose that
$| V' | \geqslant 4$.
If there exist two vertices $v_1, v_2$ such that $H' = H_{k - 1}-\{v_1,v_2\}$
  is still $2$-connected. Then $H_k = G [V (H_{k - 1}) \cup \{ a', b', c' \}
  \backslash \{ v_1, v_2 \}]$ is a $2$-connected subgraph of $G$ with order $k$.
  Otherwise, either $H_{k - 1}$ is critically $2$-connected, or there exists a
  vertex $x \in V (H_{k - 1})$ such that $H_{k - 1} - x$ is critically
  $2$-connected. By Corollary \ref{CritMin}, there exist two vertices $x, y$
  in $H_{k - 2}$ with a degree of at most $3$. Since $n \geqslant 16$, $\delta (G)
  \geqslant \left\lceil \frac{n}{4} \right\rceil + 2 \geqslant 6$, implying that $\{ x, y
  \} \subseteq V'$. Therefore, for each $v \in \{ x, y \}$, $| N_G (v)
  \backslash V (H_{k - 1}) | \geqslant \left\lceil \frac{n}{4} \right\rceil - 1$. By
  \eqref{EdgeUpper} and $k = \left\lceil \frac{n}{2} \right\rceil + 1$, we have
  \[ \left\lfloor \frac{n}{2} \right\rfloor \geqslant e (V', V (G) \backslash V (H_{k -
     1})) \geqslant 2 \left( \left\lceil \frac{n}{4} \right\rceil - 1 \right) + 2 = 2
     \left\lceil \frac{n}{4} \right\rceil . \]
  If $n \bmod 4 \neq 0$, then $2 \left\lceil \frac{n}{4} \right\rceil > \left\lfloor
  \frac{n}{2} \right\rfloor$, a contradiction. Now assume that $n \bmod 4 \equiv 0$,
  then equality holds if and only if $| V' | = 4$, for each $v \in V' \backslash
  \{ x, y \}$, $d_{G - H_{k - 1}} (v) = 1$, and for each $v \in \{ x, y \}$,
  $d_{G - H_{k - 1}} = \frac{n}{4} - 1$ (see Figure \ref{n/2+1}).
  
  \begin{figure}[ht]
    \centering
  \raisebox{-0.4747405480575104\height}{\includegraphics[scale=0.7]{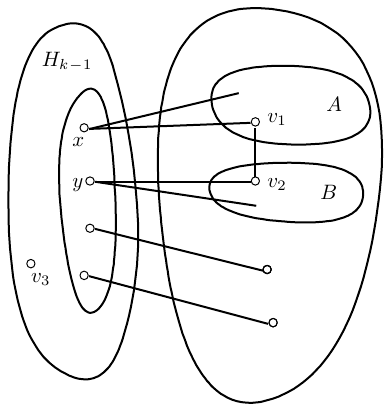}}
    \caption{Illustration for Case \ref{Case:3}}
    \label{n/2+1}
  \end{figure}
  
Let $A = N_{G - H_{k - 1}} (x)$ and $B = N_{G - H_{k - 1}} (y)$. For $v_1 \in A$, $| N_B (v_1) | \geqslant \frac{n}{4} + 2 - 1 - 2-\left( \frac{n}{4} -2\right)=1$. Then we can choose two vertices $v_2 \in N_B (v_1)$ and $v_3\in H_{k - 1} \backslash \{x,y\}$. Since $H_{k-1}$ is $2$-connected,
$H_{k-1}-v_3$ is connected. If $H_{k -1}-v_3$ is $2$-connected, then
$G [V (H_{k - 1}) \cup \{ v_1, v_2 \} \backslash v_3]$ is a $2$-connected
subgraph of $G$ with order $k$. So we suppose that $H_{k - 1}-v_3$ is not
$2$-connected.
  
\begin{claim}\label{V3OneBlock}
All vertices of $H_{k-1}-\{x,y\}$ are contained in the same block of $H_{k-1}-v_3$.\end{claim}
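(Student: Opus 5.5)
Let me fix the setting. We are in Case 3 with $n\equiv 0 \pmod 4$ and $|V'|=4$. The vertices $x,y$ have degree at most $3$ in $H_{k-1}$, and every other vertex of $H_{k-1}$ has all its neighbours inside $H_{k-1}$, except the two vertices of $V'\setminus\{x,y\}$, each of which has exactly one outside neighbour. Hence every vertex $u\in V(H_{k-1})\setminus\{x,y\}$ satisfies
\[
d_{H_{k-1}}(u)\geqslant \frac{n}{4}+1,
\]
while $|V(H_{k-1})|=\frac{n}{2}+1$. Put $W=V(H_{k-1})\setminus\{x,y,v_3\}$, so $|W|=\frac n2-2$. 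Each $u\in W$ has at least $\frac n4+1-3=\frac n4-2$ neighbours inside $W$, since at most $x$, $y$ and $v_3$ are lost.

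The plan is to show that $H_{k-1}[W]$ is $2$-connected, or at least lies inside a single block of $H_{k-1}-v_3$. The argument is a common-neighbour count. For two vertices $u,w\in W$,
\[
|N_W(u)\cap N_W(w)|\geqslant 2\Bigl(\frac n4-2\Bigr)-\bigl(|W|-2\bigr)=\frac n2-4-\frac n2+4=0 .
\]
This bound is too weak on its own, so we need one more unit of slack. We get it by handling adjacent and non-adjacent pairs separately. If $u$ and $w$ are non-adjacent, then $u,w\notin N_W(u)\cup N_W(w)$, so the union has at most $|W|-2$ elements, which is the estimate above. If $u$ and $w$ are adjacent, the edge $uw$ itself lies in a block, and we just need the other vertices to join that block.

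A cleaner route avoids the tight count. Suppose $H_{k-1}-v_3$ has a cut vertex $c$. Look at the components of $H_{k-1}-\{v_3,c\}$. Any component that contains a vertex $u$ of $W\setminus\{c\}$ must contain all of $N_W(u)\setminus\{c\}$, so it has at least $\frac n4-2$ vertices of $W$ besides $c$, and hence at least $\frac n4-1$ vertices of $W$ including $u$. Two such components would use at least $\frac n2-2$ vertices of $W$ other than $c$, but only $|W|-1=\frac n2-3$ are available (or $|W|$ if $c\notin W$). The counting is tight, so it must be checked carefully, using $d_{H_{k-1}}(u)\geqslant \frac n4+1$, the fact that $u$ cannot be adjacent to both $x$ and $y$ in a way that forces all the losses at once, and that $\deg x,\deg y\leqslant 3$. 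This shows that all vertices of $W$ lie in one component of $H_{k-1}-\{v_3,c\}$ for every $c$. Therefore no cut vertex of $H_{k-1}-v_3$ separates two vertices of $W$, and the only vertices that can sit in other blocks are $x$ and $y$. The claim is stated for $H_{k-1}-\{x,y\}$, which is exactly $W$, so this gives it.

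The main obstacle is the off-by-one tightness in the component count. When $c\in W$, I would first try to gain a unit from the fact that $u$ is counted together with its neighbours inside its own component. If that fails, I would use the extra information from the equality case: $e(V',V(G)\setminus V(H_{k-1}))$ is exactly $\frac n2$, so the two vertices of $V'\setminus\{x,y\}$ have degree exactly $1$ outside $H_{k-1}$. I would also use the freedom in choosing $v_3$, for instance picking $v_3\notin V'$, so that the inner degree bound improves to $\frac n4+2$ for most vertices, which supplies the missing unit.
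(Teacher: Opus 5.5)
Your proposal does not prove the claim. Both routes end in a count that is tight or worse. The last paragraph only lists possible remedies (the equality case, re-choosing $v_3\notin V'$) and carries none of them out.

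The tightness comes from two miscounts:
\begin{itemize}
\item $|V(H_{k-1})|=\lceil n/2\rceil=n/2$ here, not $n/2+1$, so $|W|=n/2-3$.
\item More importantly, passing to $N_W(u)$ throws away $x$ and $y$ as neighbours. But $x$ and $y$ are vertices of $H_{k-1}-v_3$, whose block structure is what the claim is about, so they may serve as common neighbours or as vertices of your components.
\end{itemize}
Even with the correct order, restricting to $W$ only gives $|N_W(u)\cap N_W(w)|\geqslant 1$ for non-adjacent $u,w$. Your component count also stays exactly tight when $c\in W$. In addition, "at least $\frac n4-2$ vertices of $W$ besides $c$" overlooks that $c$ may itself be a neighbour of $u$.

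The fix, which is the paper's argument, is to count neighbourhoods in $H_{k-1}-v_3$ rather than in $W$. Every $u\in W$ has $d_{H_{k-1}-v_3}(u)\geqslant \frac n4+1-1=\frac n4$. Suppose $u,w\in W$ lie in no common block of $H_{k-1}-v_3$. Then they are non-adjacent and have at most one common neighbour there, since two would give a $4$-cycle through $u$ and $w$. But $N_{H_{k-1}-v_3}(u)\cup N_{H_{k-1}-v_3}(w)\subseteq V(H_{k-1})\setminus\{v_3,u,w\}$, a set of size $\frac n2-3$. So the two neighbourhoods share at least $\frac n2-(\frac n2-3)=3$ vertices, a contradiction with room to spare.

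Your component route works the same way once corrected. A component of $H_{k-1}-\{v_3,c\}$ containing some $u\in W\setminus\{c\}$ contains $u$ and at least $\frac n4-1$ of its neighbours in $H_{k-1}-v_3$. Two such components would need $\frac n2$ vertices, while only $\frac n2-2$ exist.

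Either way, every pair of vertices of $W$ lies in a common block. Two blocks share at most one vertex and the block--cutvertex graph is a tree, so all of $W$ lies in a single block. You need no information about the degrees of $x$ and $y$, the equality case, or the choice of $v_3$.
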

  
If not, assume that $x_1,y_1 \in V (H_{k - 1}) \backslash \{ x,y\}$
belong to different blocks of $H_{k-1} - v_3$. Hence, $| N_{H_{k-1}-v_3}(x_1) \cap N_{H_{k-1}-v_3} (y_1) | \leqslant 1$. Combining this
  with the fact that $$| N_{H_{k - 1} - v_3} (x_1) \cup N_{H_{k - 1} - v_3}
  (y_1) | + | N_{H_{k - 1} - v_3} (x_1) \cap N_{H_{k - 1} - v_3} (y_1) | =
  d_{H_{k - 1} - v_3} (x_1) + d_{H_{k - 1} - v_3} (y_1) \geqslant
  \frac{n}{2},$$ we have $| N_{H_{k - 1} - v_3} (x_1) \cup N_{H_{k - 1} - v_3}
  (y_1) | \geqslant \frac{n}{2} - 1$. However, $N_{H_{k - 1} - v_3} (x_1) \cup
  N_{H_{k - 1} - v_3} (y_1) \subseteq V (H_{k - 1}) \backslash \{ v_3, x_1,
  y_1 \}$, and thus $| N_{H_{k - 1} - v_3} (x_1) \cup N_{H_{k - 1} - v_3} (y_1)
  | \leqslant \frac{n}{2} - 3$, a contradiction.
  
  Therefore, by Claim \ref{V3OneBlock}, $H_{k - 1} - v_3$ has at most three
  blocks, and every cut vertex does not belong to $\{ x, y \}$. Thus, $G [V (H_{k
  - 1}) \cup \{ v_1, v_2 \} \backslash v_3]$ is a $2$-connected subgraph of
  $G$ with order $k$.
  
  \begin{case}
    $\left\lceil \frac{n}{2} \right\rceil + 2 \leqslant k \leqslant \left\lfloor \frac{3 n}{4}
    \right\rfloor-2$.
  \end{case}
  
  Since $k \geqslant \left\lceil \frac{n}{2} \right\rceil + 2$, one has $| V (G)
  \backslash V (H_{k - 3}) | \leqslant \left\lfloor \frac{n}{2} \right\rfloor + 1$. For
  any two different vertices $v, v' \in V (G) \backslash V (H_{k - 3})$ and $|
  N_{G - H_{k - 3}} (v) \cap N_{G - H_{k - 3}} (v') | = d_{G - H_{k - 3}} (v)
  + d_{G - H_{k - 3}} (v') - | N_{G - H_{k - 3}} (v) \cup N_{G - H_{k - 3}}
  (v') | \geqslant 2 \left( \left\lceil \frac{n}{4} \right\rceil + 1 \right) - \left\lfloor
  \frac{n}{2} \right\rfloor + 1 = 2 \left\lceil \frac{n}{4} \right\rceil - \left\lfloor \frac{n}{2}
  \right\rfloor + 1 \geqslant 1$. Let $V' = \{ v \in V (H_{k - 3}) : d_{H_{k - 3}}
  (v) < d_G (v) \}$. Let $a$ and $b$ be two vertices in $V'$. Select two
  different vertices $a' \in N_G (a) \backslash V (H_{k - 3})$ and $b' \in N_G
  (b) \backslash V (H_{k - 3})$, and let $c'$ be a vertex in $N_{G - V (H_{k -
  3})} (a) \cap N_{G - V (H_{k - 3})} (b)$. Then, $H_k = G [V (H_{k - 3}) \cup
  \{ a', b', c' \}]$ is a $2$-connected subgraph of $G$ with order $k$.

  \begin{case}
    $k \geqslant \left\lfloor \frac{3 n}{4} \right\rfloor - 1$.
  \end{case}
  
  Since $\delta (G) \geqslant \left\lceil \frac{n}{4} \right\rceil + 2$ and $| V (G)
  \backslash V (H_{k - 1}) | \leqslant \left\lceil \frac{n}{4} \right\rceil + 1$, each
  vertex $v \in V (G) \backslash V (H_{k - 1})$ must have at least two
neighbors that belong to $V (H_{k - 1})$ in $G$, contradicting our initial assumption.

The proof is complete.
\end{proof}

Note that $\delta (Q_3) = \frac{| V (Q_3) |}{4} + 1$. Thus, the condition of
Theorem \ref{MyMinDeg} is sharp when $n$ is small.

\section{Concluding remarks} \label{ExampleForTheMin}

In this section, we construct some examples to show that the condition of
Theorem \ref{MinDegn3+1} is sharp when $n$ is small. The first example is
$C_5$. we know that $\delta (C_5) = \left\lceil \frac{5}{3} \right\rceil = 2$, and $C_5$
does not contain any $2$-connected subgraph of order $4$. The second example is Hanoi graph $H_3^2$, shown in Fig.
\ref{GraphNineVertices}. The order of $H_3^2$ is $9$, and $\delta (H_3^2) =
\left\lceil \frac{9}{3} \right\rceil - 1 = 2$. Since $H^2_3$ does not contain $C_4$,
$H^2_3$ does not contain any $2$-connected subgraph of order $4$. It is
easy to check that $H^2_3$ contains no any $2$-connected subgraph of
order $5$.

\begin{figure}[ht]
  \centering
  \raisebox{-0.4747405480575104\height}{\includegraphics[scale=0.9]{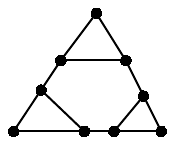}}
  \caption{The Hanoi graph $H_3^2$}
  \label{GraphNineVertices}
\end{figure}

Consider the following graph $H$ introduced by
Gould et al. \cite{GouldHaxell-701} in 2002 when they studied the problem of cylce spectra of a graph with large minimum degree. Let $m \geqslant 3$ be an
odd integer and suppose $n = 2ms$ where $s \geqslant 2$ is an integer.
We form a graph $H$ from a disjoint union of $m$ copies $K_1 [X_1, Y_1],
\ldots, K_m [X_m, Y_m]$ of the complete bipartite graph $K_{s, s}$ by adding
$m$ vertex-disjoint edges $e_1, \ldots, e_m$ as follows. For $1 \leqslant i
\leqslant m - 1$, we let $e_i$ join a vertex $y_i$ of $Y_i$ to a vertex $x_{i +
1}$ of $X_{i + 1}$, and we let $e_m$ join a vertex $y_m$ of $Y_m$ to a vertex
of $Y_m$ and to a vertex $y_0$ of $Y_1$ different from $y_1$ (see Fig.
\ref{GraphByKm}).

\begin{figure}[ht]
  \centering
  \raisebox{-0.4747405480575104\height}{\includegraphics[scale=0.7]{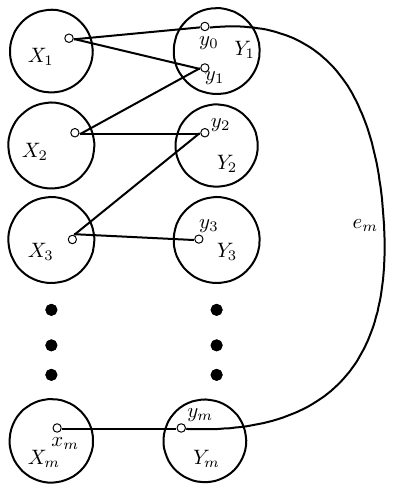}}
  \caption{A figure for Theorem \ref{Thm:4.1}}
  \label{GraphByKm}
\end{figure}

It is easily seen that $\delta (H) = s = \frac{n}{2 m}$. Now we consider all
the orders of $2$-connected subgraphs of $H$.

\begin{thm}\label{Thm:4.1}
  $H$ contains a $2$-connected subgraph of order $k$ only for each $k \in \{ 4, 5,
  \ldots, 2 s \} \cup \{ 2 m + 1, 2 m + 2, \ldots, n \}$.\label{SubgraphOfH}
\end{thm}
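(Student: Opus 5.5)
The plan is to prove the two directions separately: first that every order in the stated set is realized, and then that no $2$-connected subgraph has order strictly between $2s$ and $2m+1$. Throughout, I read the construction as follows: each copy $K_i[X_i,Y_i]$ has exactly two \emph{attachment vertices}, namely the endpoints of the edges $e_{i-1},e_i$ lying in it. These are $x_i\in X_i$ and $y_i\in Y_i$ for $2\le i\le m$, and $y_0,y_1\in Y_1$ for $i=1$. Thus $H$ is a cyclic chain of the $m$ blocks $K_{s,s}$ joined by the $m$ bridges-in-a-cycle $e_1,\dots,e_m$. We are implicitly in the regime $2s<2m+1$; otherwise the two ranges overlap and the ``only'' part is vacuous.

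\emph{Small orders.} For $2\le a,b\le s$, the graph $K_{a,b}\subseteq K_i$ is $2$-connected, and its order $a+b$ takes every value in $\{4,\dots,2s\}$.

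\emph{Large orders.} I will start from a shortest cycle $C$ that uses all of $e_1,\dots,e_m$. It uses the edge $x_iy_i$ inside $K_i$ for $i\ge 2$. Inside $K_1$ it uses a path $y_0uy_1$ with $u\in X_1$, since $y_0,y_1$ lie on the same side. Hence $|C|=2m+1$. I will then enlarge $C$ using two standard facts.
\begin{itemize}
\item Adding a vertex with at least two neighbours in a $2$-connected graph keeps it $2$-connected. In $K_1$, every further vertex of $X_1$ is adjacent to $y_0$ and $y_1$, so it can be added singly. This gives $+1$ steps, and there are $s-1$ of them available.
\item Adding an ear keeps the graph $2$-connected. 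In $K_i$ with $i\ge2$, the ear $x_i\,w\,z\,y_i$ with $w\in Y_i$, $z\in X_i$ adds $2$ vertices. After that, every remaining vertex of $X_i\cup Y_i$ has two neighbours already present and can be added singly.
\end{itemize}
The only parity obstruction is the $+2$ jump of each ear. I will handle it by always keeping at least one unused single step of $K_1$ in reserve. Concretely, I grow blocks $2,\dots,m$ by ears followed by singles, and then fill $K_1$. Any target order $t$ that is skipped by an ear jump is obtained by using one $K_1$-single just before that ear. Such a single exists because $s\ge2$. This produces every order in $\{2m+1,\dots,n\}$, ending with $H$ itself.

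\emph{Exclusion of the middle range.} Let $F$ be a $2$-connected subgraph of $H$. If $V(F)$ lies in a single $K_i$, then $|F|\le 2s$. Otherwise $F$ contains vertices $p,q$ in different blocks. By Menger's theorem, $p$ and $q$ lie on a common cycle of $F$. Each set $\{$attachment vertex of $e_j\}$ together with the edge $e_j$ forms part of a $2$-edge cut of $H$ (namely $\{e_j,e_\ell\}$ for $j\neq\ell$), so any cycle passing through two different blocks must use every $e_j$. Hence such a cycle, and therefore $F$, contains all $2m$ attachment vertices. The cycle also needs an extra vertex of $X_1$ to pass between $y_0$ and $y_1$. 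This gives $|F|\ge 2m+1$.

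I expect the main obstacle to be making this edge-cut argument precise, i.e.\ showing that every cycle meeting two blocks traverses all of $e_1,\dots,e_m$. I will do this by noting that removing any single $e_j$ leaves a graph in which each $e_\ell$ is a bridge. The parity bookkeeping in the construction is secondary but needs care.
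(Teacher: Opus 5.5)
Your proof is correct and follows essentially the paper's argument. A $2$-connected subgraph either lies inside a single copy of $K_{s,s}$ (order at most $2s$) or must use every edge $e_1,\dots,e_m$ (order at least $2m+1$); the paper gets this dichotomy from a cut-vertex/bridge argument applied directly to $H'$, while you get it from a bridge argument on a cycle through two blocks. Both then build all larger orders up from the $(2m+1)$-cycle. Your ear-plus-reserve-single bookkeeping is actually more careful than the paper's step of adding an arbitrary set $W$ to $H'$. An arbitrary $W$ need not preserve $2$-connectivity: a single extra vertex of $X_2\setminus\{x_2\}$ has only one neighbour in $V(H')$.
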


\begin{proof}
Let $H'$ be a $2$-connected subgraph of $H$.
  
  \begin{claim}
    If $H'$ has at least one edge of $\{ e_1, \ldots, e_m \}$, then $H'$
    contains all edges of $\{ e_1, \ldots, e_m \}$.\label{ContainEdge}
  \end{claim}
  
  Suppose, by way of contradiction, that $H'$ contains $e_i$ but no
  $e_{i + 1}$ for $i \in \{ 1, \ldots, m \}$(if $i = m$, let $e_{m + 1} =
  e_1$). Then $y_i$ is a cut vertex of $H'$, a contradiction.
  
  By Claim \ref{ContainEdge},~we distinguish two cases.
  \setcounter{case}{0}
  \begin{case}
    $H'$ does not have any edge in $\{ e_1, \ldots, e_m \}$.
  \end{case}
  
  In this case, $H'$ is a subgraph of $K_{s, s}$. Thus the order of $H'$ can
  be taken in $\{ 4, \ldots, 2 s \}$.
  
  \begin{case}
    $H'$ contains all edges of $\{ e_1, \ldots, e_m \}$.
  \end{case}
  
  In this case, $H'$ contains all vertices incident with edges in $\{ e_1,
  \ldots, e_m \}$ and a vertex of $X_1$. Thus the order of $H'$ is at least $2
  m + 1$. If the order of $H'$ is $2 m + 1$, then $$H' \cong H [\{ y_0, y_{1,}
  \ldots, y_m, x_1, \ldots, x_m \}]$$ where $x_1 \in X_1$. Let $\ell \in [2 m +
  2, n]$ be a integer. Since $K_1 [X_1, Y_1], \ldots, K_m [X_m, Y_m]$ are the
  complete bipartite subgraphs of $H$, we can form a $2$-connected subgraph
  $H_{\ell} \cong H [V (H') \cup W]$ of order $\ell$ in $H$ from $H'$ and a
  vertex subset $W \subseteq V (H) \backslash V (H')$ of order $\ell - 2 m -
  1$. Thus the order of $G'$ can be taken in $\{ 2 m + 1, 2 m + 2, \ldots, n
  \}$
  
  Therefore, $H$ contains a $2$-connected subgraph of order $k$ for each $k \in
  \{ 4, \ldots, 2 s \} \cup \{ 2 m + 1, \ldots, n \}$.
\end{proof}

Using Theorem \ref{SubgraphOfH}, it is easy to check that $H$ does not contain
$2$-connected subgraph of order $k$ for each $k \in \{ 2s+1, 2s+2 \}$ if $\delta (H) =\frac{n}{2(s+1)}$ when $s\geqslant 2$ and $n=2s(s+1)$.  
 Thus, if $\delta
  = \frac{n}{k}$ where $k$ is even and $k\geqslant 6$, then there exists a graph with minimum degree $\delta$ such that
  $G$ does not contain $2$-connected subgraphs of order $\ell$ for some $\ell \in \{ 4, \ldots, n
  \}$ if $n=2s(s+1)$.

It is conjectured that there is only finitely many $(v,k,2)-$ SBISDs (see 6.1.3 in \cite[pp.~111]{CD06}).  If the conjecture were to be true, then we cannot construct infinite counterexamples to Conjecture \ref{Conj:2}  by SBISDs. On the other hand, we still strongly suspect that Conjecture \ref{Conj:2} is false for infinite values of $n$. Therefore, we pose the
following conjecture, which is weaker.

\begin{conj}
  Let $G$ be a $2$-connected graph of order $n$. If $\delta (G)
  \geqslant \frac{n}{k}$ where $k\geqslant 3$, then there exists a integer $n_0=f(k)$ such that
  $G$ has $2$-connected subgraphs of order $\ell$ for each $\ell \in \{ 4, \ldots, n
  \}$ if $n \geqslant n_0$.
\end{conj}

\section*{Acknowledgment}
The second author thanks Yue Zhou for introducing the reference book \cite{{CD06}} to him.

\bibliographystyle{siam}
\bibliography{reference.bib}

%% If you have bib database file and want bibtex to generate the
%% bibitems, please use
%%

%% else use the following coding to input the bibitems directly in the
%% TeX file.

%% Refer following link for more details about bibliography and citations.
%% https://en.wikibooks.org/wiki/LaTeX/Bibliography_Management

\end{document}